\newtheorem{thm}{Theorem}[section]
\newtheorem{lemma}[thm]{Lemma}
\newtheorem{prop}[thm]{Proposition}
\newtheorem{cor}[thm]{Corollary}
\theoremstyle{definition}
\newtheorem{defn}[thm]{Definition}
\theoremstyle{remark}
\newtheorem{remark}[thm]{Remark}
\numberwithin{equation}{section}
\theoremstyle{definition}
\newtheorem{prob}{Problem}
\DeclareMathOperator{\Mod}{mod}
\newcommand{\mmod}[1]{\;(\Mod{ #1})}
\def\alp{{\alpha}} 
\def\gam{{\gamma}} 
\def\del{{\delta}} 
\def\tet{{\theta}}  
\def\lam{{\lambda}}
\def\eps{\varepsilon}
\def\implies{\Rightarrow}
\def\le{\leqslant} \def\ge{\geqslant}
\def\d{{\,{\rm d}}}
\def \bC {\mathbb C}
\def \bN {\mathbb N}
\def \bR {\mathbb R}
\def \bZ {\mathbb Z}
\def \bT {\mathbb T}
\def \bc {\mathbf c}
\def \bn {{\mathbf{n}}}
\def \bx {\mathbf x}
\def \bmu {{\boldsymbol{\mu}}}
\def \cA {\mathcal A}
\def \cK {\mathcal K}
\def \cP {\mathcal P}
\def \cR {\mathcal R}
\def \meas {\mathrm{meas}}
\def \PS {\mathrm{PS}}
\def \supp {{\mathrm{supp}}}
\def \N {\mathbb{N}}
\def \Z {\mathbb{Z}}
\begin{document}
\title[Schur--Piatetski-Shapiro]{Additive Ramsey theory over Piatetski-Shapiro numbers}
\subjclass[2020]{11B30 (primary); 05D10, 11D72, 11L15 (secondary)}
\keywords{Arithmetic combinatorics, Ramsey theory, Diophantine equations, Hardy--Littlewood method, exponential sums, partition regularity, restriction theory}
\author{Jonathan Chapman \and Sam Chow \and Philippa Holdridge}

\address{Mathematics Institute, Zeeman Building, University of Warwick, Coventry CV4 7AL, United Kingdom}
\email{Jonathan.Chapman@warwick.ac.uk}


\address{Mathematics Institute, Zeeman Building, University of Warwick, Coventry CV4 7AL, United Kingdom}
\email{Sam.Chow@warwick.ac.uk}

\address{Mathematics Institute, Zeeman Building, University of Warwick, Coventry CV4 7AL, United Kingdom}
\email{holdridge.philippa@renyi.hu}

\begin{abstract} We characterise partition regularity for linear equations over the Piatetski-Shapiro numbers $\lfloor n^c \rfloor$ when $1 < c < c^\dag(s)$, where $s \ge 3$ is the number of variables. Here $c^\dag(3) = 12/11$ and $c^\dag(4) = 7/6$, while $c^\dag(s) = 2$ for $s \ge 5$. We also establish density results with quantitative bounds. Following recent developments, we take this opportunity to update Browning and Prendiville's version of Green's Fourier-analytic transference principle, strengthening its conclusion.
\end{abstract}

\maketitle

\section{Introduction}

\subsection{Monochromatic Schur triples}

A \emph{Schur triple} is $(x,y,z) \in \bN^3$ such that $x + y = z$. Schur \cite{Sch1916} showed that if $\bN$ is partitioned into finitely many colour classes then there exists a monochromatic Schur triple. We state Schur's result below, writing $[r]$ for the set of positive integers $n \le r$.

\begin{thm} [Schur, 1916]
\label{thm1.1}
Suppose $\bN = C_1 \cup \cdots \cup C_r$. Then there exist $k \in [r]$ and $x, y, z \in C_k$ such that $x + y = z$.
\end{thm}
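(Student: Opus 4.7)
The plan is to reduce the claim to Ramsey's theorem for edge-colourings of complete graphs. Let $R_r(3)$ denote the smallest integer $N$ such that every $r$-colouring of the edges of the complete graph $K_N$ on vertex set $[N]$ contains a monochromatic triangle; the finite Ramsey theorem guarantees that $R_r(3)$ exists.

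Given the colour classes $C_1, \ldots, C_r$ of $\bN$, I set $N = R_r(3)$ and define an $r$-edge-colouring of $K_N$ by assigning to each edge $\{i,j\}$ with $i < j$ the colour $k \in [r]$ for which $j - i \in C_k$. By the choice of $N$, there exist $i < j < \l$ in $[N]$ and some $k \in [r]$ such that the three edges $\{i,j\}$, $\{j,\l\}$, $\{i,\l\}$ all receive colour $k$; equivalently,
\[
j - i,\quad \l - j,\quad \l - i \in C_k.
\]
Setting $x = j - i$, $y = \l - j$, and $z = \l - i$ then gives $x + y = z$ with $x, y, z \in C_k$, which is the desired monochromatic Schur triple.

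The main (indeed essentially only) obstacle is establishing the finiteness of $R_r(3)$. This is a standard induction on $r$: fix a vertex $v$ of $K_N$ and apply the pigeonhole principle to the $N - 1$ edges incident to $v$, producing a vertex set of size at least $\lceil (N-1)/r \rceil$ all of whose edges to $v$ carry a common colour, then recurse on the induced complete graph; the base case $R_2(3) \le 6$ is elementary. Beyond invoking Ramsey, the reduction above is immediate, and the argument is purely combinatorial with no density or analytic input required.
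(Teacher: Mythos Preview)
Your argument is correct and is in fact the standard proof of Schur's theorem via the finite Ramsey theorem. Note, however, that the paper does not supply its own proof of Theorem~\ref{thm1.1}: it is quoted as a classical 1916 result of Schur and serves only as historical and motivational background, so there is nothing in the paper to compare your proof against.
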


This was generalised by Rado \cite{Rad1933} to systems of linear equations. A system of equations is \emph{partition regular} if, for any partition of $\bN$ into finitely many colour classes, it contains a non-trivial solution for which all of the variables lie in the same colour class. A solution is \emph{non-trivial} if the variables are pairwise distinct. We now state Rado's criterion in the case of a single equation.

\begin{thm}
[Rado, 1933]
Let $s \ge 3$ and $c_1, \ldots, c_s$ be non-zero integers. Then the equation $\bc \cdot \bx = 0$ is partition regular if and only if $\sum_{i \in I} c_i = 0$ for some non-empty $I$.
\end{thm}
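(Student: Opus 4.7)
The plan is to prove the two directions separately. The necessity of the column condition follows from a classical $p$-adic colouring argument, while sufficiency rests on a substantial Ramsey-theoretic input such as Folkman's finite sums theorem or Deuber's theorem on $(m,p,c)$-sets.

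For the necessity direction, I argue by contrapositive. Suppose no non-empty $I \subseteq [s]$ satisfies $\sum_{i \in I} c_i = 0$, and fix a prime $p > \sum_{i=1}^s |c_i|$. Colour each $n \in \bN$ by the residue class modulo $p$ of $n/p^{v_p(n)}$, yielding a $(p-1)$-colouring. Given any monochromatic solution $x_1, \ldots, x_s$ of common colour $r \in \{1, \ldots, p-1\}$, let $k = \min_i v_p(x_i)$ and $I_* = \{i : v_p(x_i) = k\}$. Dividing $\sum c_i x_i = 0$ by $p^k$ and reducing modulo $p$ yields $r \sum_{i \in I_*} c_i \equiv 0 \pmod p$; since $\gcd(r, p) = 1$ and $|\sum_{i \in I_*} c_i| < p$, this forces $\sum_{i \in I_*} c_i = 0$, contradicting the hypothesis.

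For the sufficiency direction, fix $\emptyset \ne I \subseteq [s]$ with $\sum_{i \in I} c_i = 0$. By Folkman's theorem, any $r$-colouring of $\bN$ admits arbitrarily long finite sequences $a_1, \ldots, a_k$ whose non-empty subset sums are all of one colour. Given such a sequence with $k$ large in terms of $s$ and the $|c_i|$, I construct the $x_i$ as distinct subset sums of the $a_j$, using the identity $\sum_{i \in I} c_i = 0$ to force $\sum c_i x_i = 0$. The main obstacle is reconciling the equation with the distinctness requirement when $J := [s] \setminus I$ is non-empty and $\lambda := \sum_{j \in J} c_j \ne 0$: the $x_j$ for $j \in J$ cannot all coincide, yet their weighted sum must be cancelled by the weighted sum over $I$. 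A clean resolution invokes Deuber's theorem, which bundles exactly the additive flexibility required for any single-equation partition-regular pattern while automatically producing distinct solutions. In the special case $\lambda = 0$ one may shortcut via van der Waerden, extracting a long monochromatic arithmetic progression $a, a+d, \ldots, a + Nd$ and selecting distinct offsets $n_1, \ldots, n_s \in [0, N]$ with $\sum c_i n_i = 0$—feasible for $s \ge 3$ by a short linear-algebraic computation, which also explains the hypothesis $s \ge 3$ in the theorem statement.
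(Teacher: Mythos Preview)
The paper does not prove this theorem; it is quoted as classical background with a citation to Rado \cite{Rad1933} and no argument is supplied. So there is no paper proof to compare against, and your proposal should be judged on its own merits.

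Your necessity argument is correct and complete: the $p$-adic ``first non-zero digit'' colouring with $p > \sum_i |c_i|$ is exactly the standard device, and the reduction modulo $p$ after dividing by $p^k$ forces $\sum_{i \in I_*} c_i = 0$ as you say.

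The sufficiency direction, however, is not a proof but a plan with a real gap. You begin a Folkman-based construction, correctly identify the obstruction when $J = [s]\setminus I$ is non-empty with $\lambda = \sum_{j\in J} c_j \ne 0$, and then write ``a clean resolution invokes Deuber's theorem.'' That is an appeal to a black box of at least the same strength as what you are proving: Deuber's partition regularity of $(m,p,c)$-sets is essentially a reformulation and extension of Rado's criterion, and historically was proved \emph{via} Rado-type arguments. Citing it here is not circular in a formal sense, but it replaces the substantive step with a result that already contains it, and you give no indication of which $(m,p,c)$ parameters to take or how to read off a non-trivial solution. Your van~der~Waerden shortcut is fine, but it covers only the translation-invariant case $\sum_i c_i = 0$; the generic case $\lambda \ne 0$ remains unhandled. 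To close the gap you should either carry out the Folkman construction explicitly---showing how to realise the $x_j$ for $j\in J$ as distinct subset-sums whose $c_j$-weighted total is absorbed by a suitable choice of the $x_i$ for $i\in I$---or give Rado's original inductive argument, or at minimum spell out the Deuber reduction with parameters and the extraction of a non-trivial solution.
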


Erd\H{o}s and Graham \cite{Gra2007, Gra2008} asked whether there are monochromatic Schur triples in the squares --- i.e. monochromatic Pythagorean triples --- offering $\$250$ for an answer.

\begin{prob}
\label{probA}
Prove or disprove that if $\bN = C_1 \cup \cdots \cup C_r$ then there exist $k \in [r]$ and $x, y, z \in C_k$ such that $x^2 + y^2 = z^2$.
\end{prob}

A computer-assisted proof was found in the case of two colours \cite{HKM2016} which, at the time, was the largest mathematical proof ever \cite{Lam2016}. Partition regularity for generalised Pythagorean triples in five or more variables, i.e. solutions to
\[
x_1^2 + \cdots + x_{s-1}^2 = x_s^2
\]
where $s \ge 5$, was demonstrated by Chow, Lindqvist and Prendiville~\cite{CLP2021}. A higher-degree analogue of Rado's criterion was also provided therein, and the result was generalised in \cite{Cha2022, CC1, CC2}. Equations that are quadratic in some variables and linear in others have also received significant attention \cite{GL2019, Mor2017, Pac2018, Pre2021}.
Recently, it was shown in \cite{FKM} that Pythagorean triples exist in which two of the variables have the same colour.

\begin{thm} [Frantzikinakis--Klurman--Moreira, 2023+]
If 
\[
\bN = C_1 \cup \cdots \cup C_r
\]
then:
\begin{enumerate} [(i)]
\item There exists $u \in [r]$ such that $x^2 + y^2 = z^2$ for some $x,y \in C_u$ and some $z \in \bN$.
\item There exists $v \in [r]$ such that $x^2 + y^2 = z^2$ for some $x,z \in C_v$ and some $y \in \bN$.
\end{enumerate}
\end{thm}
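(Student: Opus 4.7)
The plan is as follows. By pigeonhole I pass from the $r$-colouring to a single colour class $C = C_u$ of positive upper density along a F{\o}lner sequence for $(\bN, \cdot)$. Part (i) then reduces to producing a primitive Pythagorean triple $(a,b,c)$ and an integer $k \ge 1$ with $ka, kb \in C$, and part (ii) asks for $ka, kc \in C$. Writing $f = 1_C$, both statements reduce to the positivity of a weighted bilinear average of the form $\sum_{(a,b,c)} w(a,b,c) \sum_k f(ka) f(kb)$, respectively $\sum_{(a,b,c)} w(a,b,c) \sum_k f(ka) f(kc)$, where $w \ge 0$ is supported on primitive triples parametrised by pairs $(m,n)$ with $\gcd(m,n) = 1$ and $m \not\equiv n \pmod{2}$ lying in a dyadic window.

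The main tool would be the modern structure theory for multiplicative functions due to Granville--Soundararajan and Matom{\"a}ki--Radziwill--Tao. I would decompose the multiplicative projection of $f$ into a pretentious part (correlating with some $n \mapsto \chi(n) n^{it}$) and a pseudorandom remainder. The pseudorandom contribution to the bilinear averages should vanish via short-interval concentration of Matom{\"a}ki--Radziwill type, and the pretentious part should reduce to an explicit arithmetic question about characters acting on ratios $b/a$ or $c/a$ of Pythagorean data. Here the abundance of Pythagorean triples arising from Gaussian primes $p \equiv 1 \pmod{4}$ via $p = m^2 + n^2$ should supply enough flexibility in the residue classes of $a,b,c$ to force a collision against any non-trivial $\chi$, while the density of $C$ absorbs the $n \mapsto n^{it}$ twist.

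The chief obstacle is the bilinear correlation itself: for a single fixed ratio $a:b$ the equation $bx = ay$ is not partition regular, since Rado's criterion fails unless $a = b$, so no such $k$ need exist for any individually prescribed triple. The argument therefore rests on averaging over infinitely many Pythagorean ratios simultaneously and extracting a positive main term by a second-moment analysis, where the pointwise failure on a thin set of triples is outweighed by the aggregate contribution. Establishing such a uniform concentration estimate along Pythagorean shifts in the non-pretentious regime is, I expect, the technical heart of the proof, and will require the full strength of recent advances on short averages of multiplicative functions, combined with an ergodic-theoretic input in which the additive Furstenberg correspondence is replaced by its multiplicative analogue over $(\bN, \cdot)$.
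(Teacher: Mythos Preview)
The paper does not prove this theorem. It is stated in the introduction as a background result, attributed to Frantzikinakis, Klurman and Moreira and cited as \cite{FKM}; the present paper is about linear equations over Piatetski-Shapiro numbers and never revisits Pythagorean triples. So there is no ``paper's own proof'' to compare your proposal against.

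That said, your sketch is a reasonable high-level summary of the actual strategy in \cite{FKM}: pass to a colour class of positive \emph{multiplicative} density, reduce to finding a dilate $k$ with $ka, kb \in C$ (respectively $ka, kc \in C$) for some Pythagorean triple, and analyse the resulting bilinear averages using the structure theory of bounded multiplicative functions. In the actual paper the decomposition is carried out via a Tao--Ter\"av\"ainen style ``structured plus aperiodic'' splitting for completely multiplicative functions, and the aperiodic contribution is controlled not by a direct Matom\"aki--Radziwi\l\l\ short-interval estimate but through an ergodic-theoretic argument involving joinings of Furstenberg systems and a Gowers-uniformity seminorm adapted to this setting. The structured case does indeed reduce to an explicit arithmetic computation with Dirichlet characters on Pythagorean parameters, along the lines you indicate. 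Your identification of the bilinear correlation over many ratios as the crux is correct, and your remark that no single ratio suffices (by Rado) is exactly why the averaging is essential.

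If you intend to write this up, be aware that what you have is an outline, not a proof: the passage from ``pseudorandom implies negligible contribution'' to an actual estimate is where almost all of the work in \cite{FKM} lies, and it is substantially more delicate than a second-moment argument.
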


The \emph{Piatetski-Shapiro numbers} with parameter $c > 1$ form a set
\[
\PS_c = \{ \lfloor n^c \rfloor: n \in \bN \}.
\]
Note that $\PS_2$ is the set of positive squares. When $1 < c < 2$, the Piatetski-Shapiro numbers behave similarly to the squares in terms of growth and separation, but the squares are arithmetically richer. We solve a version of Problem \ref{probA} for Piatetski-Shapiro numbers. For $s \ge 3$ an integer, let
\begin{equation}
\label{eqn1.1}
c^*(s) = 
\frac{2s + 6}{s + 8}, \qquad
c^\dag(s) = \begin{cases}
c^*(s), &\text{if } s = 3, 4 \\
2, &\text{if } s \ge 5.
\end{cases}
\end{equation}
For $c > 0$ and $N \ge 1$, let
\[
\PS_c(N) = \{ n \in \PS_c: n \le N \}.
\]

\begin{thm} 
\label{thm1.4}
Let $s \ge 3$ be an integer, and let
$1 < c < c^\dag(s)$. Let $c_1, \ldots, c_s$ be non-zero integers with $\sum_{i \in I} c_i = 0$, for some non-empty
$I$. Let $N \in \bN$ be large, and suppose
\[
\PS_c(N) = C_1 \cup \cdots \cup C_r.
\]
Then there exist $k \in [r]$ and $\bx \in C_k^s$ such that
\begin{equation}
\label{eqn1.2}
\bc \cdot \bx = 0, \qquad
x_i \ne x_j \quad (i \ne j).
\end{equation}
\end{thm}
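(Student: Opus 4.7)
The plan is to reduce Theorem~\ref{thm1.4} to a density statement via pigeonhole, then establish that density statement by the Hardy--Littlewood circle method. Since $|\PS_c(N)| \asymp N^{1/c}$, the colouring $\PS_c(N) = C_1 \cup \cdots \cup C_r$ yields some class $A = C_k$ of relative density at least $1/r$; my task then reduces to showing that any such $A$ contains a non-trivial $\bx \in A^s$ with $\bc \cdot \bx = 0$. The number of such $\bx$ is
\[
\int_0^1 \prod_{i=1}^s f(c_i\alpha)\, d\alpha, \qquad f(\alpha) = \sum_{n \in A} e(n\alpha),
\]
which I would analyse by the usual decomposition of $[0,1]$ into major arcs $\fM$ and minor arcs $\fm$.

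The analytic heart of the argument is the minor arc bound, driven by restriction-type moment estimates for the Piatetski--Shapiro exponential sum. For $c < c^\dag(s)$, one expects a control of the shape
\[
\int_0^1 |g(\alpha)|^{2t}\, d\alpha \ll N^{(2t-1)/c + o(1)}, \qquad g(\alpha) = \sum_{m \le N^{1/c}} e(\lfloor m^c \rfloor \alpha),
\]
for a suitable exponent $t$; by standard restriction / Tomas--Stein type arguments this transfers to an $L^p$ bound on $f$, forcing the minor arc contribution to be a lower-order term. The threshold $c^\dag(s)$ is exactly what the best available moment bound supports: a Vinogradov-type mean value carries us to $c < 2$ for $s \ge 5$, whereas for $s = 3, 4$ one is limited to $c^*(s) = (2s+6)/(s+8)$.

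On the major arcs the main term factorises into local densities and an archimedean singular integral, weighted by $|A|^s N^{-1}$. The Rado hypothesis $\sum_{i\in I} c_i = 0$ enters precisely here: it supplies non-degenerate local solutions (for instance by setting $x_i$ constant on $I$ and solving a reduced linear equation in the remaining variables, which carries enough degrees of freedom for generic choices), ensuring that both the singular series and the singular integral are positive. The resulting main term then dominates the minor arc error and produces the required $\bx$. The principal obstacle is thus the restriction estimate at the sharp threshold $c^\dag(s)$; once that input is in place the major arc computation follows standard templates, though care must be taken to enforce pairwise distinctness of the $x_i$ by subtracting a sparse family of degenerate configurations.
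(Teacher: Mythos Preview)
Your reduction via pigeonhole is a genuine gap, and it cannot be repaired. The Rado hypothesis in Theorem~\ref{thm1.4} is that $\sum_{i\in I} c_i = 0$ for \emph{some} non-empty $I\subseteq [s]$, which is strictly weaker than the density hypothesis $c_1+\cdots+c_s=0$ of Theorem~\ref{thm1.8}. When the full sum is non-zero the equation is not density regular, even over~$\PS_c(N)$. Take Schur's equation $x+y=z$, so $\bc=(1,1,-1)$ with $c_1+c_2+c_3=1$. The set $A=\PS_c(N)\cap(N/2,N]$ has relative density bounded below in $\PS_c(N)$, yet contains no solution whatsoever, since $x+y>N\ge z$ for all $x,y,z\in A$. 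Thus the ``density statement'' you reduce to is simply false under the hypotheses of the theorem, and no circle-method argument can establish it. Relatedly, your remark that the Rado condition enters on the major arcs to make the singular series and singular integral positive is not right: for a single linear equation local solvability is automatic, and positivity of the archimedean integral over a dense set needs translation invariance, i.e.\ $\sum_i c_i=0$, which you are not assuming.

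The paper does not pass through density at all. It uses Green's transference principle: one proves Fourier decay $\|\hat\nu-\hat 1_{[N]}\|_\infty\ll N^{1-\chi}$ and a restriction estimate $\int_\bT|\hat\psi|^t\ll N^{t-1}$ for $|\psi|\le\nu$ and some $t\in(s-1,s)$ (this is where the threshold $c^\dag(s)$ arises, and your discussion of moment bounds is on the right track here). A modelling lemma then transfers the decomposition $f_1+\cdots+f_r=\nu$, with $f_i=\nu\cdot 1_{C_i}$, to functions $g_1+\cdots+g_r\approx 1_{[N]}$ with $\|\hat f_i/\|\nu\|_1-\hat g_i/N\|_\infty$ small. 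At this point one invokes a \emph{functional} version of the Frankl--Graham--R\"odl theorem---a genuine Ramsey statement for bounded non-negative functions summing to $1_{[N]}$---to find a colour $k$ with $T(g_k)\gg N^{s-1}$; the Rado condition is used there, and only there. The generalised von Neumann lemma (controlled by the restriction estimate) then pulls this back to $T(f_k)\gg N^{s-1}$, and a separate bound on the weighted count of degenerate solutions finishes the proof. The essential point you are missing is that the colouring structure must survive the transference; pigeonhole throws it away.
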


\begin{cor} \label{cor1.5}
If $1 < c < 12/11$, and $\bN$ is partitioned into finitely many colour classes, then $\PS_c$ contains a monochromatic Schur triple.
\end{cor}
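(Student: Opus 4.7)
The plan is to deduce Corollary \ref{cor1.5} as a direct specialization of Theorem \ref{thm1.4}. A Schur triple corresponds to the equation $x+y-z=0$, which is $\bc \cdot \bx = 0$ with $s = 3$ and coefficient vector $\bc = (1,1,-1)$. First I would verify the hypotheses of Theorem \ref{thm1.4}: the entries of $\bc$ are nonzero integers, and the Rado-type subset-sum condition is satisfied by $I = \{1,3\}$ since $c_1 + c_3 = 1 + (-1) = 0$. The definition \eqref{eqn1.1} yields
\[
c^\dag(3) = c^*(3) = \frac{2 \cdot 3 + 6}{3 + 8} = \frac{12}{11},
\]
so the range $1 < c < 12/11$ in Corollary \ref{cor1.5} coincides with the range $1 < c < c^\dag(3)$ required by Theorem \ref{thm1.4}.

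Next I would reduce the colouring of $\bN$ to a colouring of a sufficiently large finite initial segment of the Piatetski-Shapiro numbers. Given a partition $\bN = C_1 \cup \cdots \cup C_r$, I would fix $N$ large enough for Theorem \ref{thm1.4} to apply and set $C_k' = C_k \cap \PS_c(N)$ for $1 \le k \le r$. These sets partition $\PS_c(N)$, and Theorem \ref{thm1.4} then furnishes an index $k \in [r]$ together with pairwise distinct $x, y, z \in C_k'$ satisfying $x + y - z = 0$. Since $C_k' \subseteq C_k$, the triple $(x,y,z)$ is monochromatic for the original colouring of $\bN$, lies entirely in $\PS_c$, and is a genuine (non-degenerate) Schur triple.

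All of the arithmetic and analytic content is packaged inside Theorem \ref{thm1.4}; the only things that need to be checked to obtain the corollary are the Rado subset-sum condition for the vector $(1,1,-1)$ and the numerical identity $c^\dag(3) = 12/11$, both of which are immediate. Consequently, there is no substantive obstacle in this step, and the corollary follows as a routine specialization.
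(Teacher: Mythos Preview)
Your proposal is correct and matches the paper's treatment: the corollary is stated immediately after Theorem~\ref{thm1.4} without a separate proof, being an obvious specialization to $s=3$, $\bc=(1,1,-1)$ together with the computation $c^\dag(3)=12/11$. The only details you add beyond what the paper makes explicit are the verification of the Rado condition via $I=\{1,3\}$ and the restriction of the colouring to $\PS_c(N)$, both of which are exactly the routine checks needed.
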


\noindent Coincidentally, Piatetski-Shapiro \cite{PS1953} also obtained the threshold $12/11$ when proving a version of the prime number theorem for $\PS_c$. 

We thank Sean Prendiville for drawing our attention to the following more general problem, which he attributes to Frantzikinakis, and which inspired the present work. Note that the case $c = 1$ is solved by Theorem \ref{thm1.1}, and that the case $c = 2$ is Problem \ref{probA}.

\begin{prob} 
\label{probB}
Prove or disprove that the conclusion of Corollary \ref{cor1.5} holds for any $c \in [1, 2]$.
\end{prob}

\subsection{Relative sparsity of 3AP-free sets}

In a very influential paper, Roth \cite{Rot1953} showed that if $N$ is large and $A \subseteq [N]$ with $|A| \gg N$ then $A$ contains non-trivial three-term arithmetic progressions, i.e. non-trivial solutions to
$x + y = 2z$. More precisely, Roth proved that if $A\subseteq [N]$ does not contain any non-trivial three-term arithmetic progressions, then $|A| \ll N/\log\log N$.

Recently, there have been strong quantitative refinements of Roth's theorem by Kelley--Meka \cite{KM2023} and Bloom--Sisask \cite{BS}. To state them, we say that $A \subset \bN$ is \emph{3AP-free} if $x + y = 2z$ has no non-trivial solutions with $x,y,z \in A$. Remarkably, the bound has the same shape as the best known lower bound for the largest 3AP-free set, which comes from a convexity construction going back to Behrend~\cite{Beh1946}.

\begin{thm}
[Kelley--Meka, 2023 and Bloom--Sisask, 2023+] If \mbox{$N \ge 2$} and $A \subset [N]$ is 3AP-free, then
\[
|A| \le \exp(-c \log^{1/9} N) N,
\]
for some constant $c > 0$.
\end{thm}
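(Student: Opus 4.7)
The plan is to pursue a density increment strategy in the cyclic setting. After standard reductions, it suffices to prove the analogous statement for $A \subseteq \bZ/N'\bZ$ with $N' \asymp N$: if $A$ has density $\alp$ and lacks non-trivial $3$-term progressions, then there is a Bohr set $B$ of controlled rank and radius, and a translate $t + B$, on which $A$ has density at least $\alp(1 + \eta)$ for $\eta = \alp^{o(1)}$. Iterating this increment terminates once the density exceeds $1$, and tracking how the rank and radius of $B$ degrade at each step converts the iteration count into the bound $\abs{A} \le \exp(-c \log^{1/9} N) N$.

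The Fourier-analytic engine compares the normalised $3$AP count $T_3(A)$ to its expected value $\alp^3$. Writing $\mathbf{1}_A = \alp + f$ with $f$ of mean zero, the count expands into a main term plus error terms controlled by bilinear and trilinear expressions in $\hat{f}$. If the count is substantially smaller than the random prediction (which must be the case if $A$ is $3$AP-free and $\alp$ is not tiny), then $f$ must exhibit substantial Fourier structure, quantified by the size and arithmetic structure of the large spectrum $\Lam_\eta = \{\xi : \abs{\hat{f}(\xi)} \ge \eta \alp\}$.

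The main obstacle, and the decisive step pioneered by Kelley--Meka, is converting such Fourier structure into a near-optimal density increment of the shape $\alp \to \alp + \alp^{1+o(1)}$, rather than the polynomially weaker $\alp \to \alp + \alp^{K}$ produced by classical Bogolyubov--Chang arguments. The key tools are, first, an almost periodicity lemma of Croot--Sisask type, producing a Bohr set $B$ on which $\mathbf{1}_A * \mathbf{1}_{-A}$ is approximately invariant under translation; and, second, a spread/unbalancedness estimate controlling higher $L^p$ norms of convolutions of indicator functions of dense sets, via a sifted moment bound. Combining these yields a Bohr set $B$ of rank $O(\log^{O(1)} \alp^{-1})$ on which the convolution structure forces a translate of $B$ with noticeably elevated $A$-density.

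To extract the $\log^{1/9} N$ exponent, I would track three parameters along the iteration: the density $\alp_j$, the rank $d_j$ of the current Bohr set, and its radius $\rho_j$. The density increment step multiplies $\alp_j$ by a factor $(1 + \alp_j^{o(1)})$, while $d_j$ and $\rho_j^{-1}$ grow by $\alp_j^{-o(1)}$ factors. A dyadic decomposition of the density scale, together with the standard volume lower bound $\abs{B} \ge (\rho/d)^{O(d)} N'$ for a regular Bohr set, shows that the process must halt while $\abs{B} \ge 1$, and a careful optimisation of the parameters in the Croot--Sisask and spread estimates yields the $\log^{1/9} N$ exponent. The hard part is balancing these parameter losses tightly enough to achieve $1/9$ rather than the weaker exponents $1/11$ or $1/12$ available from earlier methods.
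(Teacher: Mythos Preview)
The paper does not prove this theorem at all. It is stated as a known external result, attributed in the text to Kelley--Meka \cite{KM2023} with the refinement by Bloom--Sisask \cite{BS}, and is included purely as background context in \S1.2. There is therefore no ``paper's own proof'' to compare against; the authors simply cite the original sources and move on.

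Your proposal is a reasonable high-level outline of the actual Kelley--Meka strategy: density increment on Bohr sets, Croot--Sisask almost periodicity, and the new spread/unbalancedness ($L^p$-sifting) estimate that yields the near-optimal increment $\alp \to \alp(1 + \alp^{o(1)})$. As a sketch it captures the right architecture. But be aware that it is not a proof in the sense this paper would accept --- the hard work lies entirely in the quantitative bounds you gloss over (the precise parameters in the sifted moment estimate, the regularity of the Bohr sets, and the parameter bookkeeping that produces the exponent $1/9$ rather than $1/11$ or $1/12$). For the purposes of the present paper, the correct response is simply to cite \cite{KM2023, BS}, as the authors do.
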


Turning to the non-linear setting, consider the following density analogue of Problem \ref{probA}.

\begin{prob}
\label{probC}
Prove or disprove that if $A$ is a 3AP-free subset of the squares then $|A \cap [N]| = o(\sqrt N)$.
\end{prob}

\noindent
Equivalently, show that
$
x^2 + y^2 = 2z^2
$
is density regular, meaning that any $A\subseteq[N]$ which lacks non-trivial solutions to this equation must satisfy $|A|=o(N)$.

For diagonal quadratic equations in five or more variables, such as
\[
x_1^2 + \cdots + x_4^2 = 5x^2,
\]
quantitative density regularity was characterised by Browning and Prendiville \cite{BP2017}. 

\begin{thm}[Browning--Prendiville, 2017]\label{thm1.7}
Let $s \ge 5$ and $c_1, \ldots, c_s$ be non-zero integers with $c_1 + \cdots + c_s = 0$.
Let $N \in \bN$ be large,
and suppose $A \subseteq \PS_2(N)$ is such that there does not exist $\bx\in A^s$ satisfying \eqref{eqn1.2}. Then, for any $\eps>0$,
\begin{equation*}
|A|\ll_{\bc,\eps} \frac{|\PS_2(N)|}{(\log\log\log N)^{(s - 2 -\eps)/2}}.
\end{equation*}
\end{thm}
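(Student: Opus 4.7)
The plan is to combine the Hardy--Littlewood circle method with a density-increment strategy driven by restriction theory for squares. First I would transfer the problem to the integers by setting $B = \{n \in [\sqrt{N}]: n^2 \in A\}$, so that the absence of a non-trivial $\bx \in A^s$ satisfying \eqref{eqn1.2} is equivalent to the absence of a pairwise-distinct $\bn \in B^s$ with $\sum_{i=1}^s c_i n_i^2 = 0$. The number of such $\bn$ may be written as
\[
\int_0^1 \prod_{i=1}^s S(c_i \alpha) \, d\alpha + O(|B|^{s-1}), \qquad S(\alpha) := \sum_{n \in B} e(\alpha n^2),
\]
so the task reduces to bounding this integral below by the expected order $|B|^s/\sqrt{N}$.

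Next, I would execute a Farey dissection $[0,1] = \mathfrak{M} \cup \mathfrak{m}$ with major arcs near rationals of small denominator. On the minor arcs, Weyl's inequality for the quadratic exponential sum supplies the pointwise bound $|S(\alpha)| \ll |B|^{1-\sigma}$ for some $\sigma > 0$. Coupled with an $L^4$-type restriction estimate for the squares, namely $\int_0^1 |S(\alpha)|^4 \, d\alpha \ll |B|^2 (\log N)^{O(1)}$ --- and this is precisely where the assumption $s \geq 5$ is used --- this shows that the minor-arc contribution is dominated by the expected main term. On the major arcs one has the familiar dichotomy: either a standard singular-series analysis produces a main term $\gg |B|^s/\sqrt{N}$ and we locate a non-trivial solution, or else some Fourier coefficient of $B$ is anomalously large near a rational of small denominator.

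In the latter case, Plancherel combined with a pigeonhole argument yields an arithmetic progression $P \subseteq [\sqrt{N}]$ of polynomial length on which the density of $B$ is multiplicatively inflated. Because the coefficients $c_i$ sum to zero, the quadratic form is invariant under dilation, so one may pass to a sub-progression of the form $d \cdot [M]$ on which the original equation persists, and iterate. The procedure terminates after $O(\log(\sqrt{N}/|B|))$ steps, each of which shrinks the working window by a polylogarithmic factor; compounding these losses delivers the triple-logarithmic saving, with the $s$-dependent exponent reflecting the $s-2$ ``excess'' variables available beyond the $L^2$ input.

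The principal obstacle, in my view, is the density-increment step: since the diagonal quadratic form is not translation invariant under arbitrary affine shifts, some care is required to align the Fourier concentration with a substructure on which the form remains diagonal, and one must maintain local solubility through each iteration, in particular at primes dividing the current modulus. The $-\eps$ loss in the final exponent should trace back to divisor-function estimates used in this local analysis.
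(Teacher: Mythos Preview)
This theorem is quoted in the paper as a result of Browning--Prendiville \cite{BP2017} and is not proved here; the paper only invokes their general framework \cite[Proposition~2.8]{BP2017}. Nevertheless, your sketch differs from their method and contains a genuine gap.

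The problematic step is the minor-arc claim. Weyl's inequality bounds the \emph{complete} quadratic sum $\sum_{n\le X} e(\alpha n^2)$; it says nothing about $S(\alpha)=\sum_{n\in B} e(\alpha n^2)$ for an arbitrary dense subset $B\subseteq[X]$. There is no reason to expect $|S(\alpha)|\ll |B|^{1-\sigma}$ pointwise on the minor arcs, and indeed if $B$ correlates with a quadratic Bohr structure one can have $|S(\alpha)|\asymp |B|$ well away from the major arcs. Without this bound, your minor-arc estimate collapses, and the dichotomy ``either a main term or a large Fourier coefficient near a rational of small denominator'' does not get off the ground.

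Browning and Prendiville avoid this obstacle by \emph{transference} rather than a direct circle-method attack on $B$. After a $W$-trick to remove local biases, they build a pseudorandom majorant $\nu$ supported on the ($W$-tricked) squares with $1_B\le\nu$ and $\|\nu\|_1$ of the correct order. Weyl's inequality is then applied to $\hat\nu$, not to $\hat{1_B}$, yielding $\|\hat\nu-\hat 1_{[X]}\|_\infty=o(X)$; a restriction estimate $\sup_{|\psi|\le\nu}\int_{\bT}|\hat\psi|^t\ll X^{t-1}$ for some $t\in(4,5)$ (via Bourgain's argument and a fourth-moment input, which is why $s\ge 5$ is needed) then allows the $\nu$-weighted solution count to be compared with the $1_{[X]}$-weighted count. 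The density increment takes place in the \emph{dense model}, where translation invariance is available and the Frankl--Graham--R\"odl supersaturation result applies directly. The triple logarithm and the $-\eps$ in the exponent come from the $W$-trick parameter $w\sim\log\log N$ and divisor sums in the local analysis, not from an iteration on squares themselves.

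Your density-increment step has a related difficulty: passing to a subprogression $d\cdot[M]+a$ and squaring produces $d^2m^2+2adm+a^2$, so $\sum_i c_i x_i^2$ acquires a linear cross-term unless $a=0$, and the form is no longer diagonal. One cannot simply ``pass to a sub-progression on which the original equation persists'' without losing diagonality; Browning--Prendiville sidestep this entirely by incrementing in the transferred dense setting.
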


Browning and Prendiville's techniques were subsequently generalised to higher perfect powers and to primes in \cite{Cho2018}, and generalised further in \cite{Cha2022, CC1, CC2}.
By adapting these methods, we solve a version of Problem~\ref{probC} for Piatetski-Shapiro numbers, with a quantitative density bound.

\begin{thm} 
\label{thm1.8}
Let $\tau(3)=9$, and set $\tau(s)=7$ for all $s\geqslant 4$.
Let $s \ge 3$ be an integer, and let
$1 < c < c^\dag(s)$.
Let $c_1, \ldots, c_s$ be non-zero integers with
$c_1 + \cdots + c_s = 0$.
Let $N \in \bN$ be large. If $A \subseteq \PS_c(N)$ is such that there does not exist $\bx\in A^s$ satisfying \eqref{eqn1.2}, then
\begin{equation*}
|A| \leqslant \frac{|\PS_c(N)|}{\exp\left(C^{-1}(\log\log N)^{1/\tau(s)}\right)},
\end{equation*}
for some constant $C>1$ which depends only on $c$ and $\bc$.
\end{thm}

\begin{remark} Our density saving is substantially greater than the saving of $(\log \log \log N)^{O(1)}$ that Browning and Prendiville obtained in the case of squares. This is partly because the $W$-trick is not required in our setting. We make further gains by strengthening the Fourier-analytic transference principle \cite[Proposition 2.8]{BP2017} in \S \ref{sec4}. We do so by incorporating recent developments \cite{FHHK2024, Kos}.

In contrast with Theorem \ref{thm1.7}, we only need to assume that $s \ge 3$. With our constraint on $c$, we are able to obtain more precise estimates for exponential sums. This leads to a sharp restriction estimate at a lower exponent --- see \S \ref{subsec1.3} --- which ultimately reduces the number of variables that we require.
\end{remark}

Note that the natural analogue of Problem \ref{probC} for Piatetski-Shapiro sequences with parameter $c \in [1,2)$ has already been settled. A result of Saito and Yoshida \cite[Corollary 5]{SY2019} --- which uses Szemer\'edi's theorem \cite{Sze1975} to find progressions in graphs of ``slightly curved sequences'' --- implies the following.

\begin{thm} [Saito--Yoshida, 2019]
Fix $c \in [1,2)$ and an integer $k \ge 3$. Let $N \in \bN$ be large, and suppose $A \subseteq \PS_c(N)$ with 
\[
|A| \gg |\PS_c(N)|.
\]
Then $A$ contains a non-trivial $k$-term arithmetic progression.
\end{thm}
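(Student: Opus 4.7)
The plan is to reduce to Szemer\'edi's theorem via the ``slightly curved sequence'' framework, exploiting that $n \mapsto n^c$ is approximately linear on short intervals when $c < 2$. Setting $M := \lfloor N^{1/c} \rfloor$ and $\phi(n) := \lfloor n^c \rfloor$, one has $|\PS_c(N)| = M$, and since $\phi$ is strictly increasing for $c > 1$, the pullback $B := \phi^{-1}(A) \subseteq [M]$ satisfies $|B| \gg M$. A non-trivial $k$-AP in $A$ corresponds to a $k$-tuple in $B$ whose images under $\phi$ form an exact arithmetic progression.

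First, localize: by dyadic decomposition and averaging, select an interval $I = [n_0, n_0 + L]$ with $n_0 \asymp M$, $L \ll n_0^{(2-c)/2}$, $L \to \infty$, and $|B \cap I| \gg L$. Taylor expansion then gives $n^c = n_0^c + cn_0^{c-1}(n - n_0) + O(1)$ for $n \in I$. Writing $\phi(n) = \lfloor \alpha n + \beta \rfloor + \rho(n)$ with $\alpha := c n_0^{c-1}$ and $\beta := n_0^c - \alpha n_0$, the integer remainder $\rho(n)$ takes only $O(1)$ values on $I$. A pigeonhole on $\rho$ produces $B' \subseteq B \cap I$ with $|B'| \gg L$ on which $\phi$ agrees with the Beatty-like sequence $n \mapsto \lfloor \alpha n + \beta \rfloor + \rho_0$ for a fixed integer $\rho_0$.

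Next, apply Varnavides's quantitative form of Szemer\'edi's theorem to $B'$ to produce $\gg L^2$ arithmetic progressions $\{n_0 + a + jd\}_{j=0}^{K-1}$ of length $K = K(k, c)$. For each such progression, the consecutive image-differences take values in $\{\lfloor \alpha d \rfloor, \lfloor \alpha d \rfloor + 1\}$, with the smaller value occurring at step $j$ precisely when $\chi_j := \{\alpha(n_0 + a) + \beta + \alpha jd\}$ satisfies $\chi_j < 1 - \{\alpha d\}$. Since $\chi_j$ is a Kronecker sequence with step $\{\alpha d\}$, whenever $\{\alpha d\} \le 1/k$ it admits a run of $\ge k - 1$ consecutive indices $j$ with $\chi_j < 1 - \{\alpha d\}$, yielding an exact $k$-AP among the images -- that is, a non-trivial $k$-AP in $A$. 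A further pigeonhole on $d$, using equidistribution of $\{\alpha d\} \bmod 1$ in $d$, guarantees the existence of a suitable $d$ among the APs found by Varnavides.

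The main obstacle is the interplay between Diophantine structure and combinatorics: Szemer\'edi alone produces an AP in $B'$, but one requires the common difference $d$ to be Diophantine-compatible with the slope $\alpha = cn_0^{c-1}$ so that the image is an \emph{exact}, not merely approximate, arithmetic progression. Combining Varnavides's lower bound on the number of APs with equidistribution of $\{\alpha d\}$ bridges this gap; this interlocking of Diophantine input with Szemer\'edi's combinatorial theorem is the heart of Saito--Yoshida's argument.
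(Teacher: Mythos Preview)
The paper does not itself prove this theorem; it merely quotes it from Saito--Yoshida, noting only that their argument ``uses Szemer\'edi's theorem to find progressions in graphs of slightly curved sequences''. So there is no proof in the paper to compare against, and I will comment on the internal soundness of your sketch.

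Your overall plan --- pull back to $B\subseteq[M]$, localise to a short interval $I$ on which $n\mapsto n^c$ is linear up to $O(1)$, pigeonhole on the bounded integer error $\rho(n)$ to obtain $B'$ on which $\phi(n)=\lfloor\alpha n+\beta\rfloor+\rho_0$, and then invoke Szemer\'edi inside $B'$ --- is correct and is very much in the spirit of the cited argument. The gap is in your final step. You find $\gg L^2$ many $K$-term progressions in $B'$ via Varnavides, and then assert that ``a further pigeonhole on $d$, using equidistribution of $\{\alpha d\}\bmod 1$'', produces one with $\{\alpha d\}\le 1/k$. This does not follow. Varnavides only guarantees that the set of common differences occurring has size $\gg_{\delta',K}L$, and the implicit constant here is of Szemer\'edi (tower) type --- it can certainly be far smaller than $1/k$. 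Equidistribution of $\{\alpha d\}$ over \emph{all} $d\in[L/K]$ says nothing about its behaviour on this sparse, combinatorially defined subset, so there is no reason the two sets must intersect.

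The repair is that no Diophantine condition on $d$ is needed: a \emph{single} $K$-AP in $B'$ already suffices once $K=O(k^2)$, so ordinary Szemer\'edi (not Varnavides) is enough. Given a $K$-AP $a,a+d,\ldots,a+(K-1)d$ in $B'$, write the images as $y_j=\lfloor x_0+j\theta\rfloor+\rho_0$ with $\theta=\alpha d$. Instead of seeking $k-1$ \emph{consecutive} equal gaps, apply Dirichlet to $\theta$ to obtain $m\le 2k$ with $\|m\theta\|\le 1/(2k)$, and pass to the sub-progression with step $m$. Along that sub-progression the fractional parts $\{x_0+j_0\theta+im\theta\}$ move by at most $1/(2k)$ per step; the subsequence $\{x_0+lm\theta\}_{l\ge 0}$ enters the interval $[0,1-(k-1)\{m\theta\})$ within $k-1$ steps and then remains there for the next $k-1$ steps, yielding $k$ consecutive image values in exact arithmetic progression. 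Thus some $j_0+(k-1)m\le 2(k-1)m\le 4k^2$ works, and the claim follows with $K=4k^2$. This replaces the Varnavides/equidistribution interplay you propose with a direct Dirichlet argument applied \emph{after} the progression is found.
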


One can deduce the following application using the further observation that if $c_1 + \cdots + c_s = 0$ then, for any $\lam \in \bR$ and $\mu_1 = \cdots = \mu_s \in \bR$,
\[
\bc \cdot \bx = 0 \implies \bc \cdot (\lam \bx + \bmu) = 0 \qquad (\bx \in \bR^s).
\]

\begin{cor} 
Let $s \ge 3$ be an integer, and let
$1 < c < 2$.
Let $c_1, \ldots, c_s$ be non-zero integers with
$c_1 + \cdots + c_s = 0$.
Let $N \in \bN$, and suppose $A \subseteq \PS_c(N)$ is such that there does not exist $\bx\in A^s$ satisfying \eqref{eqn1.2}. Then
$
|A| = o(|\PS_c(N)|)
$ 
as $N \to \infty$.
\end{cor}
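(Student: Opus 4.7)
The plan is to reduce the corollary to the Saito--Yoshida theorem by exploiting the affine invariance highlighted just before the corollary: when $\sum_i c_i = 0$, the equation $\bc \cdot \bx = 0$ is preserved under every substitution $\bx \mapsto \lam \bx + \mu \mathbf{1}$ with $\lam, \mu \in \bR$ and $\mathbf{1} = (1, \ldots, 1)$. Consequently, as soon as I can exhibit a single integer ``template'' solution $\bn = (n_1, \ldots, n_s) \in \bZ^s$ of $\bc \cdot \bn = 0$ with pairwise distinct entries, any sufficiently long arithmetic progression in $A$ will, after affine rescaling, produce a non-trivial solution in $A^s$ to \eqref{eqn1.2}. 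Saito--Yoshida supplies such progressions as soon as $A$ has positive relative density inside $\PS_c(N)$.

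The first step is to produce the template. The integer kernel
\[
K = \{ \bn \in \bZ^s : \bc \cdot \bn = 0 \}
\]
has rank $s - 1 \ge 2$ and contains $\mathbf{1}$. For each pair $i \ne j$, I claim $K$ is not contained in the hyperplane $H_{ij} = \{\bx : x_i = x_j\}$: otherwise $\be_i - \be_j$ would lie in the $\bQ$-orthogonal complement of $K$, which is the line $\bQ \bc$, so $\be_i - \be_j = \alpha \bc$ for some $\alpha \in \bQ$; comparing any coordinate $k \notin \{i, j\}$ gives $\alpha c_k = 0$, impossible as each $c_k$ is non-zero. Hence $K \cap H_{ij}$ is a proper sublattice of $K$ for every pair, and $K$ cannot be covered by the finite union $\bigcup_{i < j} H_{ij}$; so there exists $\bn \in K$ with all entries distinct. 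Adding a suitable multiple of $\mathbf{1}$ (which lies in $K$), I normalise so that $\min_i n_i = 0$ and set $k = \max_i n_i + 1$, noting $k \ge s \ge 3$.

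To finish, suppose $A \subseteq \PS_c(N)$ has $|A| \ge \delta |\PS_c(N)|$ for some fixed $\delta > 0$. For $N$ sufficiently large in terms of $\delta$ and $k$, the Saito--Yoshida theorem produces a non-trivial $k$-term arithmetic progression $a, a+d, \ldots, a + (k-1) d$ in $A$ with $d \ge 1$. Taking $x_i := a + d\, n_i$ gives $s$ pairwise distinct elements of $A$ with
\[
\bc \cdot \bx = a \sum_i c_i + d \sum_i c_i n_i = 0,
\]
which is a non-trivial solution to \eqref{eqn1.2}. Contrapositively, any solution-free $A \subseteq \PS_c(N)$ satisfies $|A| < \delta |\PS_c(N)|$ for all sufficiently large $N$; since $\delta > 0$ was arbitrary, $|A| = o(|\PS_c(N)|)$. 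No real obstacle arises beyond the elementary linear-algebra step producing $\bn$ with distinct entries, as Saito--Yoshida does all the arithmetic-combinatorial work.
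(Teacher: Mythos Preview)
Your proposal is correct and follows precisely the route the paper sketches: use the affine invariance of $\bc \cdot \bx = 0$ under $\bx \mapsto \lam \bx + \mu \mathbf{1}$ (valid because $\sum_i c_i = 0$) to convert a long arithmetic progression in $A$, supplied by Saito--Yoshida, into a non-trivial solution of \eqref{eqn1.2}. The paper leaves the details implicit, whereas you have carefully supplied the template vector $\bn \in K$ with pairwise distinct entries via the linear-algebra argument; this is exactly the missing piece the paper expects the reader to fill in.
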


Compared with these wonderful findings, Theorem \ref{thm1.8} has the advantage of providing a quantitative density bound. 

\bigskip

One can also consider these problems for Piatetski-Shapiro primes, i.e. primes that are Piatetski-Shapiro numbers. Mirek \cite{Mir2015} achieved this in the case of 3AP-free sets, but required that $c < 72/71$. We state Mirek's result below, writing $\cP$ for the set of primes.

\begin{thm}
[Mirek, 2015]
Let $1 < c < 72/71$. If $N$ is large and $A$ is a 3AP-free subset of $\PS_c(N) \cap \cP$, then $|A| = o(|\PS_c(N) \cap \cP|)$.
\end{thm}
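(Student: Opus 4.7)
The plan is to adapt Green's transference-principle proof of Roth's theorem in the primes to the sparse set $\PS_c \cap \cP$. Identify a 3AP-free $A \subseteq \PS_c(N) \cap \cP$ with a function $a \colon [N] \to \{0,1\}$ and introduce a weighted indicator $\nu \colon [N] \to [0,\infty)$ supported on $\PS_c \cap \cP$, normalised so that $\sum_{n \le N} \nu(n) \asymp N$, incorporating both the sparsity factor $n^{1 - 1/c}$ coming from $\PS_c$ and the logarithmic factor $\log n$ from primes. It suffices to obtain a lower bound of order $\delta^{O(1)} N^2$ for the trilinear count $T(f,f,f) = \sum_{x + y = 2z} f(x) f(y) f(z)$ when $f = a \nu$ has mean $\delta > 0$; removing the trivial diagonal then yields a non-trivial 3AP in $A$.

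The central analytic input is a minor-arc bound for the exponential sum $S(\alpha) = \sum_{n \le N} \nu(n) e(\alpha n)$. Via the identity $\mathbf{1}_{\PS_c}(n) = \lfloor -n^{1/c} \rfloor - \lfloor -(n+1)^{1/c} \rfloor$ and Vaaler's approximation to the fractional part, this reduces to controlling $\sum_n \Lambda(n) e(\alpha n + h n^{1/c})$ over dyadic ranges of $h$. Heath-Brown's identity decomposes such a sum into Type I and Type II pieces; the monomial phase $e(h n^{1/c})$ is handled by repeated van der Corput differencing, while the arithmetic phase $e(\alpha n)$ is dealt with by Weyl-type estimates on the minor arcs. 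Balancing the Vaaler truncation height, the Heath-Brown decomposition ranges, and the number of differencing iterations is precisely what pins the admissible range to $c < 72/71$.

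With such pointwise minor-arc decay in hand, one upgrades to a restriction estimate $\int_\T |\widehat{g \nu}(\alpha)|^p\, d\alpha \ll N^{p-2}$ for some $p$ slightly greater than $2$ and all $g \colon [N] \to \bC$ with $|g| \le 1$, via Bourgain--Tomas duality combined with a large sieve for the Piatetski-Shapiro primes. Green's transference machinery then produces a bounded model $\tilde f \colon [N] \to [0,1]$ of comparable mean whose 3AP count matches that of $f$ up to acceptable error, and Roth's theorem on $[N]$ delivers the desired lower bound. A standard $W$-trick should be installed at the outset to neutralise local obstructions from small prime divisors, replacing $[N]$ by an appropriate arithmetic progression in the usual way.

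The main obstacle is the minor-arc estimate itself. The oscillatory factor $e(h n^{1/c})$ introduced by Vaaler's approximation must be truncated carefully, and its interaction with the Heath-Brown decomposition and the van der Corput iterations is delicate; one must also ensure that the resulting bound is sufficiently uniform in $\alpha$ to feed into the restriction step. Pushing the admissible exponent all the way to $72/71$ requires the sharpest known Type II bounds for bilinear sums with a monomial phase, and is the central technical innovation of the argument.
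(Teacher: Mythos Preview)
The paper does not contain a proof of this theorem at all; it is stated without proof as a result quoted from Mirek's article \cite{Mir2015}, serving purely as contextual background. There is therefore nothing in the paper to compare your proposal against.

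That said, your outline is a faithful sketch of Mirek's actual argument: the transference framework with a $W$-trick, a majorant on $\PS_c \cap \cP$ combining the $n^{1-1/c}$ and $\log n$ factors, the reduction via Vaaler/Erd\H{o}s--Tur\'an approximation and Heath-Brown's identity to Type~I and Type~II bilinear sums carrying a monomial phase $e(h n^{1/c})$, van der Corput iteration for the minor-arc bound, and a restriction estimate feeding into Green's dense-model step. The threshold $72/71$ indeed arises from optimising the parameters in the Type~II estimates. One small slip: with your normalisation $\sum_{n \le N} \nu(n) \asymp N$, the restriction estimate should read $\int_\bT |\widehat{g\nu}(\alpha)|^p \, d\alpha \ll N^{p-1}$, not $N^{p-2}$; compare the form of \eqref{eqn3.3} in the paper.
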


One should think twice before looking for monochromatic Schur triples of Piatetski-Shapiro primes, however, as there are no Schur triples of odd numbers. Using our methods and the ingredients \cite[Theorem 1.8 and Lemma 1.10]{Mir2015}, one can show that if $1 < c < 72/71$ and $\PS_c \cap \cP = C_1 \cup \cdots \cup C_r$ then there exist $k \in [r]$ and $x,y,z \in C_k$ such that
\[
x + y = z + 1.
\]
See \cite{CC1, Le2012, ZZ2025} for some similar results and open problems. We leave it to the interested reader to explore further generalisations.

\subsection{Methods}
\label{subsec1.3}

We use the Fourier-analytic transference principle, as pioneered by Green \cite{Gre2005}. 
We therefore introduce a weight function $\nu$ supported on $\PS_c(N)$ such that $\| \nu \|_1 \sim N$ as $N \to \infty$. The idea is to compare the $\nu$-weighted count of solutions with the $1_{[N]}$-weighted count, essentially by combining some estimates in Fourier space. The latter count was estimated by Frankl, Graham and R\"odl \cite{FGR1988}. The key ingredients for the density result, Theorem \ref{thm1.8}, are as follows.
\begin{enumerate}
\item Fourier decay: For some $\chi \in (0,1)$, we have
\[
\| \hat \nu - \hat 1_{[N]} \|_\infty \ll_\chi N^{1-\chi}.
\]

\item Restriction estimate: For some $t \in (s-1,s)$, we have
\[
\sup_{|\psi| \le \nu} \int_\bT |\hat \psi(\alp)|^t \d \alp \ll_t N^{t-1}.
\]

\item Trivial count:
\[
\sum_{\bn \in \cK} \prod_{i \le s} \nu(n_i) = o(N^{s-1}) \qquad (N \to \infty),
\]
where 
\[
\cK = \{ \bn \in \bZ^s: \bc \cdot \bn = 0, \quad
n_i = n_j \text{ for some } i \ne j \}.
\]
\end{enumerate}
There is one other ingredient, \emph{density transfer}, which roughly asserts that
\[
\sum_{n \in A} 1_{[N]}(n) \gg N \quad \Longrightarrow \quad \sum_{n \in A} \nu(n) \gg N,
\]
but this is easy to confirm. The Ramsey-theoretic result, Theorem \ref{thm1.4}, requires further combinatorial machinery, which we import from \cite{CLP2021}.

We gauge the strength of our results by the value of $c^\dag(s)$. This depends on the values of $\chi$ and $t$ relating to 
Fourier decay and restriction, so we have attempted to maximise $\chi$ and minimise $t$. 
Although we are able to obtain power-saving Fourier decay for all $1<c<2$ in Proposition \ref{prop2.1}, we do need to assume that $c < c^\dag(s)$ in order to also obtain the restriction estimate \eqref{eqn3.3}, which is a limitation when $s \le 4$.

\subsection*{Organisation}

We establish a Fourier decay estimate in Section \ref{sec2}, and a restriction estimate in Section \ref{sec3}. In \S \ref{sec4}, we strengthen
the transference principle provided by Browning in Prendiville in \cite[Proposition 2.8]{BP2017}, using recent developments. Finally, in Section \ref{sec5}, we prove Theorems~\ref{thm1.4} and \ref{thm1.8}.

\subsection*{Notation}

For $x \in \bR$, we write $e(x) = e^{2 \pi i x}$. We put $\bT = [0,1]$. For $f \in \ell^1(\bZ)$, we define the Fourier transform by
\[
\hat f(\alp) = \sum_{n \in \bZ} f(n) e(n \alp).
\]

We employ the Vinogradov and Bachmann--Landau asymptotic notations, as we now describe. For complex-valued functions $f$ and $g$, we write $f \ll g$ or $f = O(g)$ if there exists a constant $C$ such that $|f| \le C|g|$ pointwise. We indicate the dependence of the implicit constant $C$ on some parameters $\lam_1, \ldots, \lam_t$ using subscripts, for example $f \ll_{\lam_1, \ldots, \lam_t} g$ or $f = O_{\lam_1, \ldots, \lam_t}(g)$. We write $f \asymp g$ if $f \ll g \ll f$. We write $f = o(g)$ if $f/g \to 0$ and $f \sim g$ if $f/g \to 1$, in some specified limit.

\subsection*{Funding}

JC was supported by the Heilbronn Institute for Mathematical Research, and is currently supported by EPSRC through Joel Moreira's Frontier Research Guarantee grant, ref. \texttt{EP/Y014030/1}. PH is supported by the Warwick Mathematics Institute Centre
for Doctoral Training, and gratefully acknowledges funding from the University of Warwick.

\subsection*{Rights}

For the purpose of open access, the authors have applied a
Creative Commons Attribution (CC-BY) licence to any Author Accepted Manuscript version arising from this submission.

\section{Fourier decay}
\label{sec2}

The purpose of this section is to prove the following Fourier decay estimate for our weight function 
\[
\nu(n) = \nu_c(n):= \phi'(n)^{-1} 1_{\PS_c(N)}(n),
\]
where $\phi(n) = n^{1/c}$.

\begin{prop} [Fourier decay]
\label{prop2.1}
Let $c \in (1,2)$ and $\eps > 0$. Then
\[
\| \hat \nu - \hat 1_{[N]} \|_\infty
\ll_{c, \eps} N^{\eps + 6/5 - 2/(5c)}.
\]
\end{prop}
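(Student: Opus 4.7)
The plan is to follow the classical Vaaler--van der Corput route for Piatetski--Shapiro exponential sums. The starting point is the identity
\[
1_{\PS_c}(n) = \bigl(\phi(n+1) - \phi(n)\bigr) + \bigl(\psi(-\phi(n+1)) - \psi(-\phi(n))\bigr),
\]
where $\psi(x) = \{x\} - 1/2$ is the sawtooth function. Since $\phi'(n)^{-1}(\phi(n+1) - \phi(n)) = 1 + O(1/n)$ (because $\phi''/\phi' \ll 1/n$), the first bracket contributes $\hat 1_{[N]}(\alpha) + O(\log N)$ to $\hat\nu(\alpha)$, so the whole task reduces to bounding
\[
E(\alpha) = \sum_{n \le N} \phi'(n)^{-1} \bigl[\psi(-\phi(n+1)) - \psi(-\phi(n))\bigr] e(n\alpha).
\]

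Next I would apply Vaaler's trigonometric approximation of $\psi$: for a parameter $H \ge 1$ to be chosen later, write $\psi = \psi_H^* + r_H$, with $\psi_H^*(x) = \sum_{0 < |h| \le H} a_h e(hx)$, $|a_h| \ll 1/|h|$, and $|r_H(x)|$ majorised by a non-negative trigonometric polynomial of degree $H$ whose Fourier coefficients are bounded by $1/H$. Substituting into $E(\alpha)$ yields a main contribution $\sum_{0 < |h| \le H} a_h T_h(\alpha)$ with
\[
T_h(\alpha) := \sum_{n \le N} \phi'(n)^{-1} e\bigl(n\alpha - h\phi(n)\bigr),
\]
a Vaaler-error contribution controlled by the sums $T_h(0)$ ($h \ne 0$) weighted by $1/H$, and an $h = 0$ term of size $\|\nu\|_1/H \asymp N/H$.

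The phase $f(x) = \alpha x - h x^{1/c}$ defining $T_h(\alpha)$ satisfies $f^{(k)}(x) \asymp h\, x^{1/c - k}$ for every $k \ge 2$, so I would estimate $T_h(\alpha)$ by van der Corput's $k$th derivative test, together with partial summation to absorb the slowly-varying weight $\phi'(n)^{-1} \asymp n^{1 - 1/c}$. This produces power-saving bounds polynomial in both $h$ and $N$. Inserting these back, summing against the coefficients $a_h$, and adding the Vaaler-error and trivial contributions leads to a total estimate of the schematic form $N/H + P(H, N)$ for an explicit polynomial expression $P$. Optimising in $H$ should then land on the claimed exponent $6/5 - 2/(5c) + \eps$.

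The main obstacle will be obtaining a single estimate valid uniformly across the whole range $c \in (1, 2)$ and $\alpha \in \bT$. The second derivative test alone already suffices near $c = 1$, but its secondary error $N^{2 - 3/(2c)}$ becomes trivial as $c \to 3/2$, and each higher derivative test in the van der Corput hierarchy breaks down analogously for $c$ closer to $2$. Matching the target exponent $6/5 - 2/(5c)$ uniformly will therefore likely require either a judiciously chosen exponent pair obtained via $A$- and $B$-process iteration, or splitting the Vaaler $h$-sum into several ranges that are treated by different derivative tests --- possibly supplemented by Kuzmin--Landau first-derivative estimates in regimes where $\alpha - h\phi'(n)$ stays quantitatively away from integers.
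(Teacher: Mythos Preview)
Your overall framework is the same as the paper's (Heath-Brown/Vaughan: sawtooth identity, trigonometric approximation of $\psi$, van der Corput), and the opening reductions are correct. However, there is a genuine gap at the step where you pass from $E(\alpha)$ to the sums $T_h(\alpha)$.

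When you substitute the Fourier expansion of $\psi$ into $E(\alpha)$, the inner sum over $n$ carries the factor $e(-h\phi(n+1)) - e(-h\phi(n))$, not just $e(-h\phi(n))$. You cannot legitimately collapse this to a single $T_h(\alpha)$: shifting $n \mapsto n-1$ in the first piece only produces $(e(-\alpha) - 1)T_h(\alpha)$ up to small errors, and $|e(-\alpha)-1|$ is generically of size $1$, so nothing is gained. By discarding the difference you are left with the weight $\phi'(n)^{-1} \asymp n^{1-1/c}$ multiplying the unweighted exponential sum, and the second-derivative error term then contributes $N^{2-3/(2c)}$, which is exactly the obstruction you flag in your final paragraph.

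The paper avoids this entirely by keeping the difference together: writing the inner sum as
\[
\sum_{P < n \le 2P} \frac{e(m(\phi(n+1)-\phi(n))) - 1}{\phi'(n)} \, e(n\alpha + m\phi(n))
\]
and observing that the amplitude $\varphi_m(n) = \phi'(n)^{-1}\bigl(e(m(\phi(n+1)-\phi(n)))-1\bigr)$ satisfies $\varphi_m \ll |m|$ and $\varphi_m' \ll |m|/n$, because $|e(\theta)-1| \ll |\theta|$ and $\phi(n+1)-\phi(n) \asymp \phi'(n)$. Partial summation against this amplitude then gives $S_0 \ll \sum_{0<|m|\le M} \sup_x |U_m(x)|$ with no $P^{1-1/c}$ loss, and now the ordinary second-derivative test on $U_m$ handles the entire range $1<c<2$ uniformly. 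With the standard error terms $S_1,S_2$ treated via Heath-Brown's Lemma~1, the optimisation $M = P^{4/5 - 3/(5c)}$ balances $M^{3/2}P^{1/(2c)}$ against $M^{-1}P^{2-1/c}$ and delivers $P^{6/5 - 2/(5c)+\eps}$. So the higher derivative tests, exponent-pair iteration, and Kuzmin--Landau splitting you anticipate are all unnecessary; the missing idea is precisely this cancellation of $\phi'(n)^{-1}$ against the difference $e(m\phi(n+1)) - e(m\phi(n))$.
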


The broad strategy used to understand the Fourier coefficients of $\nu$ is given in \cite{HB1983}, where it is attributed to Vaughan.

\begin{lemma}
For all $c>1$ and $n\in\N$,
\[
1_{\PS_c}(n) = \lfloor -\phi(n) \rfloor - \lfloor - \phi(n+1) \rfloor.
\]
\end{lemma}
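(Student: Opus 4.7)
The plan is to reduce the identity to counting how many natural numbers $m$ lie in the interval $[\phi(n),\phi(n+1))$, and then to verify that this interval is short enough to contain at most one such $m$.

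First I would rewrite the right-hand side using the elementary identity $\lfloor -x\rfloor=-\lceil x\rceil$, which turns
\[
\lfloor -\phi(n)\rfloor-\lfloor -\phi(n+1)\rfloor
\]
into $\lceil \phi(n+1)\rceil-\lceil \phi(n)\rceil$. A standard count then shows that this difference equals the number of integers $m$ satisfying $\phi(n)\le m<\phi(n+1)$.

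Next I would match this with membership in $\PS_c$. Since $\phi$ is the inverse of $x\mapsto x^c$, the equation $\lfloor m^c\rfloor=n$ is equivalent to $n\le m^c<n+1$, i.e.\ $\phi(n)\le m<\phi(n+1)$. Hence $n\in\PS_c$ if and only if there is \emph{some} $m\in\N$ in the half-open interval $[\phi(n),\phi(n+1))$.

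It remains to show that there is at most one such $m$, i.e.\ that the interval has length strictly less than $1$. For $c>1$ and $x\ge 1$ we have
\[
\phi'(x)=\frac{1}{c}\,x^{1/c-1}\le \frac{1}{c}<1,
\]
so by the mean value theorem $\phi(n+1)-\phi(n)<1$. Combining the three observations gives
\[
1_{\PS_c}(n)=\lceil\phi(n+1)\rceil-\lceil\phi(n)\rceil=\lfloor -\phi(n)\rfloor-\lfloor -\phi(n+1)\rfloor,
\]
as required. There is no real obstacle here; the only point worth care is the (harmless) boundary case where $\phi(n)$ happens to be an integer, which falls inside the half-open interval and is correctly counted.
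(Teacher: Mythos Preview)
Your proof is correct and follows essentially the same approach as the paper's: both use the mean value theorem to show $\phi(n+1)-\phi(n)<1$, and both reduce the identity to the equivalence $\lfloor m^c\rfloor=n \iff \phi(n)\le m<\phi(n+1)$. The only cosmetic difference is that you pass explicitly through the ceiling-function identity $\lfloor -x\rfloor=-\lceil x\rceil$ and interpret $\lceil\phi(n+1)\rceil-\lceil\phi(n)\rceil$ as a count of integers in $[\phi(n),\phi(n+1))$, whereas the paper argues directly that the right-hand side takes values in $\{0,1\}$ and characterises when it equals $1$.
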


\begin{proof}
Since $c>1$, the mean value theorem implies that the function
\begin{equation*}
    n \mapsto \lfloor -\phi(n) \rfloor - \lfloor - \phi(n+1) \rfloor
\end{equation*}
has image $\{ 0,1 \}$. Observe that $n\in\N$ is mapped to $1$ by this function if and only if there exists $m\in\N$ such that
\begin{equation*}
    \phi(n)\leqslant m < \phi(n+1).
\end{equation*}
Applying $\phi^{-1}$, this is equivalent to
\begin{equation*}
    n \leqslant m^c < n+1,
\end{equation*}
which holds if and only if $n=\lfloor m^c\rfloor$.
\end{proof}

An application of this lemma reveals that
\begin{equation}
\label{eqn2.1}
\hat{\nu}(\alpha) = \sum_{n \le N}
\phi'(n)^{-1} e(n \alp)
(\lfloor -\phi(n) \rfloor - \lfloor - \phi(n+1) \rfloor).
\end{equation}
To further modify this expression, we deploy the saw-tooth function
\begin{equation*}
    \psi(x) := x-\lfloor x\rfloor - 1/2.
\end{equation*}
For each $n\in\N$, observe that
\[
\lfloor -\phi(n) \rfloor - \lfloor - \phi(n+1) \rfloor =
\phi(n+1) - \phi(n) + \psi(-\phi(n+1)) - \psi(-\phi(n)).
\]
By Taylor's theorem with Lagrange remainder, there exists $\xi_n\in[0,1]$ such that
\[
\phi(n+1) - \phi(n) = 
\phi'(n) + \phi''(n+\xi_n)/2.
\]
Thus,
\begin{align*}
&\sum_{n \le N}
\phi'(n)^{-1} e(n \alp)
(\phi(n+1) - \phi(n)) \\ &= 
\sum_{n \le N}
e(n \alp)\left( 1 + \frac{(1-c)(n+\xi_n)^{\frac{1}{c} - 2}}{2cn^{\frac{1}{c}-1}}\right)\\
& = \hat{1}_{[N]}(\alpha) + O(\log N).
\end{align*}
Inserting this into (\ref{eqn2.1}), we find that
\begin{equation*}
    \hat{\nu}(\alpha) - \hat{1}_{[N]}(\alpha) = \sum_{n \le N} 
\frac{\psi(-\phi(n+1)) - \psi(-\phi(n))}{\phi'(n)}
e(n\alp) + O(\log N).
\end{equation*}

To make further progress, we split the sum on the right-hand side into dyadic ranges:
\begin{equation}
\label{eqn2.2}
\hat{\nu}(\alpha) - \hat{1}_{[N]}(\alpha) = \sum_{k \le \frac{\log N}{\log 2}} S(N/2^k) + O(\log N),
\end{equation}
where
\[
S(P) = S(P;\alpha) = \sum_{P < n \le 2P} 
\frac{\psi(-\phi(n+1)) - \psi(-\phi(n))}{\phi'(n)}
e(n\alp).
\]
To prove Proposition \ref{prop2.1}, we require estimates for the exponential sums $S(P)$. As noted in \cite[\S2]{HB1983}, we have the following standard approximation for $\psi$ by trigonometric sums.

\begin{lemma}
\label{lem2.3}
For any integer $M \ge 2$,
\begin{align*}
\psi(t) - \sum_{0 < |m| \le M} \frac{e(-mt)}{2 \pi i m}  \ll
\min \{1, (M\| t \|)^{-1} \} = \sum_{m\in\bZ} b_{m}e(mt),
\end{align*}
for some 
\[
b_{m} \ll \min \left \{ \frac{\log M}{M}, \frac{1}{|m|}, \frac{M}{m^{2}} \right \}.
\]
\end{lemma}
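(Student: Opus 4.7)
The plan is to treat the lemma as two independent claims: first, the pointwise majorisation
\[
\psi(t) - \sum_{0 < |m| \le M} \frac{e(-mt)}{2\pi i m} \ll \min\{1,\,(M\|t\|)^{-1}\},
\]
and second, a Fourier expansion of $F(t) := \min\{1,(M\|t\|)^{-1}\}$ whose coefficients satisfy the stated trilateral bound.

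For the majorisation, I would start from the classical identity $\psi(t) = -\pi^{-1} \sum_{m \ge 1} \sin(2\pi mt)/m$, valid for $t \notin \bZ$, so that the truncation error equals $-\pi^{-1}\sum_{m > M} \sin(2\pi mt)/m$. When $M\|t\| \ge 1$, an Abel summation using the Dirichlet kernel bound $\bigl|\sum_{m \le K} \sin(2\pi m t)\bigr| \ll \|t\|^{-1}$ controls the tail by $O((M\|t\|)^{-1})$. When $M\|t\| < 1$, combining the trivial bound $|\psi(t)| \le 1/2$ with $\sum_{m \le M} |\sin(2\pi mt)|/(\pi m) \le 2M\|t\| \le 2$ gives $O(1)$.

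For the Fourier expansion, $F$ is even, $1$-periodic, continuous and piecewise smooth, equal to $1$ on $|t| \le 1/M$ and to $1/(M|t|)$ on $1/M \le |t| \le 1/2$, so its Fourier series $\sum_m b_m e(mt)$ with $b_m := \int_0^1 F(t) e(-mt)\,dt$ represents $F$ pointwise. The three bounds on $b_m$ arise from three levels of integration by parts. The trivial bound $|b_m| \le \|F\|_1 = 2/M + (2/M)\log(M/2) \ll (\log M)/M$ is immediate. One integration by parts, whose boundary term vanishes by periodicity, gives $|b_m| \le (2\pi|m|)^{-1}\|F'\|_1$, and $\|F'\|_1 = 2\int_{1/M}^{1/2} dt/(Mt^2) = O(1)$. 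A second integration by parts produces both a boundary contribution at the corner $t = \pm 1/M$ (where $F'$ jumps from $0$ to $\mp M$) of size $O(M/|m|)$, and an integral of the regular part $F''(t) = \pm 2/(Mt^3)$ also of size $O(M/|m|)$; division by another factor of $|m|$ then gives $|b_m| \ll M/m^2$.

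The only mildly subtle point is the conditional convergence of the Fourier series for $\psi$ in the first step, which rules out a crude triangle inequality and necessitates Abel summation; the remaining calculations are routine bookkeeping around the corner of $F$ at $|t| = 1/M$. The entire scheme follows the approach attributed to Vaughan in \cite[\S 2]{HB1983}.
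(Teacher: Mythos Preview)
Your argument is correct. The paper does not actually prove Lemma~\ref{lem2.3}; it merely records it as a standard fact, referring the reader to \cite[\S2]{HB1983}. What you have written is a faithful and accurate unpacking of that standard argument: Abel summation against the Dirichlet kernel for the tail of the sine series when $M\|t\|\ge 1$, the crude $O(1)$ bound when $M\|t\|<1$, and then zero, one, and two integrations by parts on the explicit piecewise function $F(t)=\min\{1,(M\|t\|)^{-1}\}$ to extract the three coefficient bounds. The only point worth flagging is that in the second integration by parts you should also keep track of the jump of $F'$ at $t=\pm 1/2$ (of size $O(1/M)$), but this is harmless since it is dominated by the $O(M)$ jump at $t=\pm 1/M$. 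Since the paper offers no proof of its own, there is nothing further to compare.
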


We apply this lemma with $M\in[2,P]$ equal to a power of $P$ to be specified later. This gives
\[
S(P) - S_0 \ll S_1 + S_2,
\]
where
\begin{align*}
S_0 &= \sum_{0 < |m| \le M} \frac1{2 \pi i m} \sum_{P < n \le 2P} \frac{e(n \alp + m \phi(n+1)) - e(n \alp + m \phi(n))}{\phi'(n)}, \\
S_1 &= \sum_{P < n \le 2P } 
\frac{\min\{1, (M \|\phi(n)\|)^{-1} \}}{\phi'(n)}, \\
S_2 &= \sum_{P < n \le 2P}  
\frac{\min\{1, (M \|\phi(n+1)\|)^{-1} \}}{\phi'(n)}.
\end{align*}

We begin by bounding $S_0$. Using partial summation, we find that
\begin{align*}
&\sum_{P < n \le 2P} \frac{e(n \alp + m \phi(n+1)) - e(n \alp + m \phi(n))}{\phi'(n)}\\&= U_{m}(2P)\varphi_{m}(2P) - \int_{P}^{2P} U_{m}(x) \varphi'_{m}(x) \d x, \end{align*}
where
\[
\varphi_{m}(x) = \frac{e(m(\phi(x+1) -\phi(x)))- 1}{\phi'(x)}
\]
and
\[
U_{m}(x) = \sum_{P < n \le x} e(n \alp + m\phi(n)).
\]
We compute that 
$
\varphi_{m}(x) \ll m
$ 
and
$
\varphi'_{m}(x) \ll m/x,
$
whence
\[
S_0 \ll 
\sum_{0 < |m| < M} \sup_{x \in (P,2P]} \left| U_{m}(x)\right|.
\]
Notice that, for $P < x \le 2P$, the function $F(x)=\alp x+ m\phi(x)$ satisfies $|F''(x)| \asymp |m|P^{1/c-2}$. Invoking van der Corput's lemma \cite[Lemma~3.1]{Mir2015}, we deduce that
\[U_{m}(x) \ll |m|^{1/2} P^{1/(2c)} + P^{1-1/(2c)} |m|^{-1/2},\]
and so
\[
S_{0} \ll M^{3/2} P^{1/(2c)} + M^{1/2} P^{1-1/(2c)}.
\]

We now consider the sum $S_1$. The key ingredient is the following estimate observed by Heath-Brown \cite[Lemma 1]{HB1983}.

\begin{lemma}
\label{lem2.4}
Let $m \in \bZ$ and $1/2 < \gam < 1$. Then, whenever we have $1 \le P \le P_1 \le 2P$, 
\begin{equation*}
\sum_{P < n \le P_1} e(m n^\gamma) \ll \min\{P, |m|^{-1} P^{1-\gamma} + (|m| P^\gamma)^{1/2}
\}.
\end{equation*}
\end{lemma}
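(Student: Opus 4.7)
My plan is to combine the trivial bound with two classical derivative tests for exponential sums. Writing $f(x) = m x^\gam$, we have on $(P, 2P]$ that $|f'(x)| = |m\gam x^{\gam - 1}| \asymp |m| P^{\gam - 1}$ and $|f''(x)| \asymp |m| P^{\gam - 2}$, with $f'$ monotonic. The bound is trivial when $m = 0$, so I assume $m \ne 0$. The first part of the minimum, $\ll P$, follows since the sum has at most $P$ terms each of modulus $1$.

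For the other summand in the minimum, I would dichotomise on the size of $|m| P^{\gam - 1}$. In the \emph{small} regime $|m| P^{\gam - 1} \le 1/2$, we have $|f'(x)| \le 1/2$ throughout, so $\|f'(x)\| = |f'(x)| \asymp |m| P^{\gam - 1}$. The Kuzmin--Landau first derivative test (applied to the monotonic $f'$) then produces
\[
\sum_{P < n \le P_1} e(m n^\gam) \ll (|m| P^{\gam - 1})^{-1} = |m|^{-1} P^{1 - \gam}.
\]
In the \emph{large} regime $|m| P^{\gam - 1} > 1/2$, van der Corput's second derivative test in the form \cite[Lemma~3.1]{Mir2015} produces
\[
\sum_{P < n \le P_1} e(m n^\gam) \ll P(|m| P^{\gam - 2})^{1/2} + (|m| P^{\gam - 2})^{-1/2} = (|m| P^\gam)^{1/2} + |m|^{-1/2} P^{1 - \gam/2}.
\]
The regime hypothesis is exactly the inequality $|m| \gg P^{1-\gam}$, which is what is needed to absorb the second summand into the first, yielding $\ll (|m| P^\gam)^{1/2}$. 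In either regime we obtain $\ll |m|^{-1} P^{1 - \gam} + (|m| P^\gam)^{1/2}$, and combining with the trivial bound completes the proof.

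No serious obstacle is anticipated, as both derivative tests are standard tools in the van der Corput theory of exponential sums. The only point requiring care is the transition at $|m| P^{\gam - 1} \asymp 1/2$: one needs to verify that the vdC bound in the large regime is indeed dominated by $(|m| P^\gam)^{1/2}$, which is an elementary comparison reducing to the hypothesis defining that regime.
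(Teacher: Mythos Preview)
Your argument is correct and is exactly the standard proof via the Kuzmin--Landau first derivative test and van der Corput's second derivative test, with the dichotomy at $|m| P^{\gam-1} \asymp 1$; the absorption of the second van der Corput term into $(|m|P^\gam)^{1/2}$ under the large-regime hypothesis is handled correctly. The paper itself does not give a proof of this lemma at all --- it is simply quoted as \cite[Lemma~1]{HB1983} --- so there is nothing further to compare.
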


To simplify our notation, let $\gamma = 1/c$, so that $\phi(n) = n^\gamma$. By the triangle inequality,
\begin{equation*}
\phi'(P) S_1 \le \sum_{m\in \bZ} |b_{m}|\left| \sum_{P < n \le 2P} e(m\phi(n)) \right|.
\end{equation*}
For further convenience, we write
\begin{equation*}
T_m = T_m(P) = \sum_{P < n \le 2P} e(m\phi(n)).
\end{equation*}
Our goal is to establish bounds for
\begin{equation*}
\sum_{m\in\bZ}|b_mT_m|.
\end{equation*}
We accomplish this by examining the contributions from various ranges for $m\in\bZ$. 

We begin with ``small'' $|m|$ for which the bound $b_m \ll (\log M)/M$ from Lemma \ref{lem2.3} suffices. The contribution from $m=0$ is immediate:
\begin{equation*}
|b_0 T_0| = |b_0|P\ll \frac{P\log M}{M}.
\end{equation*}
Notice that, for any $m,P \ge 1$, we have
\begin{equation*}
(m P^\gamma)^{1/2} \le m^{-1} P^{1-\gamma} 
\end{equation*}
if and only if
\begin{equation*}
m \leqslant P^{2/3 - \gamma}.
\end{equation*}
Let $L = \lfloor (2P)^{2/3 - \gamma}\rfloor$. By Lemma \ref{lem2.4},
\begin{align*}
\sum_{0<|m|\le L} |b_mT_m| &\ll \frac{\log M}{M} P^{1-\gamma} \sum_{0<|m|\le L} \frac{1}{|m|} \\ &\ll \frac{(\log M)\log L}{M} P^{1-\gamma} \ll_\eps \frac{P^{1-\gamma + \eps}}{M}.
\end{align*}
A similar computation reveals that
\begin{equation*}
\sum_{L<|m|\le M}|b_m T_m| \ll \frac{\log M}{M} P^{\gamma/2} \sum_{L<|m|\le M} |m|^{1/2} \ll M^{1/2} P^{\gamma/2} \log M.
\end{equation*}

For $|m| > M$, we use the bounds $|b_m| \ll M/m^2$ and $|T_m|^2\ll |m|P^{\gamma}$ from Lemmas \ref{lem2.3} and \ref{lem2.4}, respectively, to deduce that
\begin{equation*}
\sum_{|m| > M} |b_m T_m| \ll
M P^{\gam/2} \sum_{|m| > M} |m|^{-3/2} \ll M^{1/2} P^{\gamma/2}.
\end{equation*}
Since $\phi'(P) = \gamma P^{\gamma - 1}$, we can combine all of the above observations to see that
\[
\sum_{m \in \bZ} |b_m T_m| \ll (M^{-1}P + M^{1/2}P^{\gam/2}) \log M,
\]
whence
\begin{equation*}
S_1 \ll
(M^{-1}P^{2-\gamma} + M^{1/2}P^{1-\gamma/2}) \log M.
\end{equation*}
Using an almost identical argument, we can also show that
\begin{equation*}
S_2 \ll (M^{-1}P^{2-\gamma} + M^{1/2} P^{1-\gamma/2}) \log M.
\end{equation*}

We have therefore demonstrated that
\begin{equation*}
S(P) \ll M^{3/2} P^{\gam/2} + (M^{-1}P^{2-\gamma} + M^{1/2}P^{1-\gamma/2}) \log M.
\end{equation*}
Choosing $M = P^{4/5 - 3\gam/5}$ gives
\begin{equation*}
S(P) \ll_\eps P^{\eps + 6/5 - 2\gam/5}, 
\end{equation*}
for any $\eps > 0$. Inserting this estimate into (\ref{eqn2.2}) finishes the proof of Proposition \ref{prop2.1}.

\bigskip

Before moving on to the next section, we record the following observations. By Proposition \ref{prop2.1}, if $\eps > 0$ then
\[
\| \nu \|_1 = \hat \nu(0) = N + O(N^{\eps + 6/5 - 2/(5c)}).
\]
In particular, since $c < 2$,
\begin{equation}
\label{eqn2.3}
\| \hat \nu \|_\infty =
\| \nu \|_1 \sim N \qquad (N \to \infty).
\end{equation}

\section{Fourier restriction}
\label{sec3}

In this section, we establish two restriction estimates for our weight function $\nu=\nu_c$.

\begin{prop}
[Fourier restriction I]
\label{prop3.1}
Let $1 < c < 2$.
Suppose we have
\begin{equation}
\label{eqn3.1}
\| \hat \nu - \hat 1_{[N]} \|_\infty \ll_{c,\chi} N^{1-\chi},
\end{equation}
for some $\chi \in (0,1)$. Define $t_0 > 2$ by
\begin{equation}
\label{eqn3.2}
\chi = 2\frac{1-1/c}{t_0 - 2},
\end{equation}
and let $t > t_0$. Let $\psi: \bZ \to \bC$ with $|\psi| \le \nu$. Then
\begin{equation}
\label{eqn3.3}
\int_\bT |\hat \psi(\alp)|^t \d \alp
\ll_{c,\chi,t} N^{t-1}.
\end{equation}
\end{prop}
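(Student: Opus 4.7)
The strategy is a Fourier-dualization argument (a Tomas--Stein-style bound, or transference in the spirit of Green and Bourgain) that converts the Fourier decay hypothesis \eqref{eqn3.1} into sharp level-set bounds on $|\hat\psi|$. Write $E_\lam := \{ \alp \in \bT : |\hat\psi(\alp)| > \lam \}$ and let $\mu(E_\lam)$ denote its Lebesgue measure. Choose a unimodular function $\sig$ on $E_\lam$ with $\sig\hat\psi = |\hat\psi|$, and introduce the dual function $g(n) = \int_{E_\lam}\sig(\alp) e(n\alp)\,\d\alp$. I would unfold $\int_{E_\lam}|\hat\psi|$ via the definition of $\hat\psi$, interchange sum and integral, and then apply a weighted Cauchy--Schwarz inequality together with the majorisation $|\psi| \le \nu$ to obtain
\[
\lam^2 \mu(E_\lam)^2 \;\le\; \Big( \int_{E_\lam} |\hat\psi(\alp)|\,\d\alp \Big)^2 \;\le\; \|\nu\|_1 \sum_{n} \nu(n) |g(n)|^2.
\]

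Next I would expand $\sum_n \nu(n) |g(n)|^2$ into the double integral $\int_{E_\lam}\int_{E_\lam} \sig(\alp)\overline{\sig(\bet)}\hat\nu(\alp - \bet)\,\d\alp\,\d\bet$, decompose $\hat\nu = \hat 1_{[N]} + (\hat\nu - \hat 1_{[N]})$, and handle the two pieces separately. The $\hat 1_{[N]}$ piece unfolds to the partial Plancherel sum $\sum_{n = 1}^N |g(n)|^2$, which by Plancherel applied to $\sig 1_{E_\lam}$ is bounded by $\mu(E_\lam)$. The remaining piece is controlled by $\mu(E_\lam)^2 \|\hat\nu - \hat 1_{[N]}\|_\infty \ll \mu(E_\lam)^2 N^{1 - \chi}$, by \eqref{eqn3.1}. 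Combining these, with $\|\nu\|_1 \sim N$ from \eqref{eqn2.3}, yields $\lam^2 \mu(E_\lam)^2 \ll N\mu(E_\lam) + N^{2 - \chi}\mu(E_\lam)^2$, which rearranges to the key level-set bound
\[
\mu(E_\lam) \;\ll\; N/\lam^2 \qquad \text{for}\quad \lam \ge C_0 N^{1 - \chi/2},
\]
for some absolute constant $C_0 > 0$.

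The restriction estimate then follows by layer-cake: $\int_\bT |\hat\psi|^t\,\d\alp = t \int_0^\infty \lam^{t-1} \mu(E_\lam)\,\d\lam$, split at $\lam_0 := C_0 N^{1 - \chi/2}$. The range $\lam > \lam_0$ contributes $\ll N \int_{\lam_0}^\infty \lam^{t-3}\,\d\lam \ll N^{t-1}$, since $t > 2$. For $\lam \le \lam_0$ I would apply Chebyshev with the crude $L^2$ bound
\[
\int_\bT |\hat\psi(\alp)|^2\,\d\alp \;=\; \|\psi\|_2^2 \;\le\; \|\nu\|_\infty \|\nu\|_1 \;\ll_c\; N^{2 - 1/c},
\]
where $\|\nu\|_\infty \ll_c N^{1 - 1/c}$ is immediate from the definition $\nu(n) = \phi'(n)^{-1} 1_{\PS_c(N)}(n)$ with $\phi(n) = n^{1/c}$. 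This contribution is $\ll N^{2 - 1/c + (1 - \chi/2)(t - 2)}$, which is at most $N^{t - 1}$ precisely when $\chi(t - 2) \ge 2(1 - 1/c)$, i.e.\ when $t \ge t_0$; the strict inequality $t > t_0$ provides the room needed to absorb the implicit constants.

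The main conceptual step is recognising the $\hat 1_{[N]}$ contribution inside the double integral as a truncated Plancherel sum bounded by $\mu(E_\lam)$ --- this is precisely where the Fourier decay \eqref{eqn3.1} is translated into the restriction inequality \eqref{eqn3.3}. Everything else is careful exponent bookkeeping, and the threshold $t_0$ defined by \eqref{eqn3.2} emerges naturally from the two-parameter optimisation.
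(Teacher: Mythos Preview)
Your argument is correct and follows the same high-level architecture as the paper --- an $L^2$ input, a level-set estimate for $|\hat\psi|$, and layer-cake integration --- but your level-set step is genuinely different and more self-contained. The paper discretises $\cR_\del$ to a maximal $1/N$-separated set of $R$ points, applies H\"older's inequality to pass to $|\hat\nu|^\gam$, and then invokes Bourgain's $\Lambda(p)$ argument from \cite{B1989} as a black box to conclude $R \ll \del^{-\gam}$. You instead work continuously: after the weighted Cauchy--Schwarz you split $\hat\nu = \hat 1_{[N]} + (\hat\nu - \hat 1_{[N]})$, recognise the $\hat 1_{[N]}$ contribution as the truncated Plancherel sum $\sum_{n=1}^N |g(n)|^2 \le \|\sig 1_{E_\lam}\|_2^2 = \mu(E_\lam)$, and bound the error term trivially by $\mu(E_\lam)^2 N^{1-\chi}$. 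This yields $\mu(E_\lam) \ll N/\lam^2$ directly for $\lam \gg N^{1-\chi/2}$, without any appeal to Bourgain's lemma. Your route is more elementary and in fact gives a sharper level-set bound ($\lam^{-2}$ versus $\lam^{-(t_0+\eps)}$) in the large-$\lam$ range; the paper's route is closer to the standard template in the restriction literature and generalises more readily when the comparison kernel is not as simple as $\hat 1_{[N]}$.

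One small slip to fix: in the large-$\lam$ range you wrote $N\int_{\lam_0}^\infty \lam^{t-3}\,\d\lam$, which diverges since $t > 2$. The correct upper limit is $\|\hat\psi\|_\infty \le \|\nu\|_1 \ll N$, after which the integral is $\ll N \cdot N^{t-2} = N^{t-1}$ as claimed. Similarly, in the small-$\lam$ range it is cleaner to bound $\int_{\{|\hat\psi|\le\lam_0\}} |\hat\psi|^t \le \lam_0^{t-2}\|\hat\psi\|_2^2 \ll \lam_0^{t-2} N^{2-1/c}$ directly, rather than phrase it as Chebyshev plus layer-cake (which would require a further trivial split near $\lam = 0$); the resulting exponent $2 - 1/c + (1-\chi/2)(t-2)$ is then exactly as you state, and is at most $t-1$ precisely when $t \ge t_0$.
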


\begin{proof}
A na\"ive approach would be to consider $t=2$ and use orthogonality to obtain
\begin{equation}\label{eqn3.4}
\int_{\bT} |\hat{\psi}(\alp)|^{2}\d \alp \le \sum_{n \le N} \nu(n)^{2} \ll N^{2-1/c}.
\end{equation}
This only saves $1/c$ in the exponent. We need to save an additional exponent of $1-1/c$ and to do this, we use an epsilon removal process. This method is usually used to remove an arbitrarily small power $\eps$, but it can in fact achieve more, as in \cite[Appendix E]{CLP2021}.

Note from \eqref{eqn2.3} that $\| \hat \psi \|_\infty \ll N$.
For $0 < \del \ll 1$, define the large spectra by
\[
\mathcal{R}_{\delta} = \{ \alp \in \bT : |\hat{\psi}(\alp)| > \delta N \}.
\]
We claim that it suffices to prove that
\begin{equation}
\label{eqn3.5}
\mathrm{meas}
(\mathcal{R}_{\delta}) \ll_{\eps,t_0} \frac{1}{\delta^{t_{0}+\eps}N}
\end{equation}
for all sufficiently small $\eps>0$. To see this, let $C>0$ be such that $\sum_{n \le N} \nu(n) \le CN$ for all $N \ge 1$. Then
\[
\int_{\bT} |\hat{\psi}(\alp)|^{t} \d \alp \le \sum_{n=0}^{\infty} (2^{-n} CN)^{t} \mathrm{meas}(\mathcal{R}_{2^{-n}C}) \ll N^{t-1} \sum_{n=0}^{\infty} 2^{(t_{0}-t+\eps)n}.
\]
Then we simply take $\eps < t-t_{0}$.

Now, by \eqref{eqn3.4},
\[
\delta^{2}N^{2} \mathrm{meas}(\mathcal{R}_{\delta}) \le \int_{\bT} |\hat{\psi}(\alp)|^{2} \d \alp \ll N^{2-1/c},
\]
and therefore
\[
\mathrm{meas}(\mathcal{R}_{\delta}) \ll \delta^{-2}N^{-1/c}.
\]
So \eqref{eqn3.5} holds if
\[ \delta^{-2}N^{-1/c} \le \delta^{-t_{0}-\eps}N^{-1},
\]
which is equivalent to
\[
\delta \le N^{-\eta},
\]
where
\begin{equation}
\label{eqn3.6}
\eta = \frac{1-1/c}{t_{0}-2+\eps}.
\end{equation}
We may therefore suppose that
\begin{equation}
\label{eqn3.7}
N^{-\eta} < \delta \ll 1.
\end{equation}

Let $\theta_{1}, \ldots,\theta_{R} \in \cR_{\del}$ be such that $\|\theta_{i} -\theta_{j}\| \ge 1/N$ whenever $i \ne j$.
Let $c_{r}$ and $a_n$ be such that 
\[
|\hat{\psi}(\theta_{r})| = c_{r}\hat{\psi}(\theta_{r}), \qquad \psi(n) = a_{n} \nu(n),
\]
noting that $|c_r|, |a_n| \le 1$. Cauchy's inequality gives
\begin{align*}
\delta^{2}N^{2}R^{2} &\le \left( \sum_{r=1}^{R} |\hat{\psi}(\theta_{r})| \right)^{2} = \left(\sum_{r=1}^{R} c_{r}\sum_{n} a_{n} \nu(n)e(n\theta_{r})\right)^{2} \\ &\ll N \sum_{n} \nu(n) \left| \sum_{r=1}^{R} c_{r}e(n\theta_{r}) \right|^{2} \le N \sum_{1 \le r, r' \le R} |\hat{\nu}(\theta_{r}-\theta_{r'})|.
\end{align*}
Consequently
\[
\delta^{2}NR^{2} \ll \sum_{1 \le r, r' \le R} |\hat{\nu}(\theta_{r}-\theta_{r'})|
\]
and, by applying H\"older's inequality with $\gamma >2$ a constant to be chosen later,
\[
\delta^{2\gamma} N^{\gamma}R^{2} \ll \sum_{1 \le r, r' \le R} |\hat{\nu}(\theta_{r}-\theta_{r'})|^{\gamma}.
\]
It follows from \eqref{eqn3.1} and standard upper bounds for $\hat{1}_{[N]}$ that
\[
\hat{\nu}(\theta) \ll \frac{N}{1 + N \| \theta \|} + N^{1-\chi} \qquad (\tet \in \bT).
\]
    
Now
\[
\delta^{2\gamma} N^{\gamma}R^{2} \ll N^{\gamma} \sum_{1 \le r, r' \le R}  F(\theta_{r}-\theta_{r'}) + N^{\gamma(1-\chi)} R^{2},
\]
where $F(\tet) = (1 + N \| \tet \|)^{-\gam}$. Recalling \eqref{eqn3.2} and \eqref{eqn3.6}, we see that $\eta < \chi/2$. Combining this \eqref{eqn3.7}, we find that $N^{\gamma(1-\chi)}R^{2} = o(\delta^{2\gamma} N^{\gamma}R^{2})$. Therefore
\[ \delta^{2\gamma} R^{2} \ll \sum_{1 \le r, r' \le R}  F(\theta_{r}-\theta_{r'}).
\]
Then, by the argument in \cite[Section 4]{B1989}, but with $N^{2}$ replaced by $N$, the exponent $\gamma/2$ replaced by $\gam$ in the definition of $F$, and with $Q=1$, we obtain
\[
\delta^{2\gamma} R^{2} \ll 1.
\]
Hence $R \ll \delta^{-\gamma}$. Recalling that $\theta_{1},...,\theta_{R}$ are $1/N$-spaced points in $\mathcal{R}_{\delta}$, and supposing that this set is as large as possible, it follows that
\[
\mathrm{meas}(\mathcal{R}_{\delta}) \ll R/N \ll \frac{1}{\delta^{\gamma}N}.
\]
Choosing $\gamma = t_{0} + \eps$ then gives \eqref{eqn3.5} and completes the proof.
\end{proof}

Our second restriction estimate is similar, but we bootstrap a fourth moment estimate --- a bound on the additive energy --- instead of a second moment estimate. Specifically, we invoke \cite[Equation 47]{AEM2021}, which is essentially \cite[Theorem 2]{RS2006}. This tells us that
\[
\# \{ (x_1, \ldots, x_4) \in [X]^4:
|x_1^c + x_2^c - x_3^c - x_4^c| \le 2 \} \ll_{c,\eps} X^{2 + \eps} + X^{4-c+\eps},
\]
for any $c > 1$ and any $\eps > 0$.
Consequently, if $1 < c < 2$ and $\eps > 0$, then
\begin{align}
\notag E_c(N) &:=
\# \{ (n_1, \ldots, n_4) \in \PS_c(N)^4: n_1 + n_2 = n_3 + n_4 \}  \\ \label{eqn3.8}
&\ll_{c,\eps}
N^{\frac{4}{c} - 1 + \eps}.
\end{align}

\begin{prop}[Fourier restriction II]\label{prop3.2}
Let $1 < c < 2$.
Let $t > 4$, and suppose we have \eqref{eqn3.1} for some $\chi \in (0,1)$. Let $\psi: \bZ \to \bC$ with $|\psi| \le \nu$. Then we have \eqref{eqn3.3}.
\end{prop}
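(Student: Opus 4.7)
The plan is to imitate the proof of Proposition \ref{prop3.1}, replacing its $L^{2}$-based trivial bound \eqref{eqn3.4} with a fourth-moment input coming from the additive energy estimate \eqref{eqn3.8}. As in Proposition \ref{prop3.1}, a dyadic decomposition over the level sets $\cR_{\delta} = \{\alpha \in \bT : |\hat{\psi}(\alpha)| > \delta N\}$ reduces the proposition to establishing
\[
\mathrm{meas}(\cR_{\delta}) \ll_{\eps} \delta^{-4-\eps} N^{-1} \qquad (0 < \delta \ll 1)
\]
for every sufficiently small $\eps > 0$; the resulting geometric sum then converges whenever $t > 4$.

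The first step is a trivial fourth-moment bound. Since $\nu(n) \ll N^{1-1/c}$ on $\PS_{c}(N)$ and $|\psi| \le \nu$, expanding $\int_{\bT} |\hat{\psi}|^{4}$ as a count of quadruples $(n_{1}, \ldots, n_{4})$ satisfying $n_{1} + n_{2} = n_{3} + n_{4}$ and invoking \eqref{eqn3.8} yields
\[
\int_{\bT} |\hat{\psi}|^{4} \le \int_{\bT} |\hat{\nu}|^{4} \ll N^{4 - 4/c} E_{c}(N) \ll_{c,\eps'} N^{3 + \eps'}.
\]
Chebyshev's inequality then gives $\mathrm{meas}(\cR_{\delta}) \ll \delta^{-4} N^{-1 + \eps'}$, which dominates the target whenever $\delta \le N^{-\eta'}$ with $\eta' = \eps'/\eps$. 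Choosing $\eps'$ small enough in terms of $\eps$ (specifically $\eps' \le \eps\chi/2$) ensures $\eta' \le \chi/2$, so that the trivial bound handles all $\delta \le N^{-\chi/2}$.

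For the complementary range $\delta > N^{-\chi/2}$, I would rerun the Bourgain-style argument of Proposition \ref{prop3.1} with $\gamma = 4 + \eps$. That is, I would extract a $1/N$-separated subset $\theta_{1}, \ldots, \theta_{R}$ of $\cR_{\delta}$, apply Cauchy--Schwarz followed by H\"older to obtain $\delta^{2\gamma} N^{\gamma} R^{2} \ll \sum_{r,r'} |\hat{\nu}(\theta_{r} - \theta_{r'})|^{\gamma}$, and then insert the split $|\hat{\nu}(\theta)| \ll N/(1+N\|\theta\|) + N^{1-\chi}$ afforded by \eqref{eqn3.1}. The lower bound $\delta > N^{-\chi/2}$ is precisely what absorbs the $N^{\gamma(1-\chi)}R^{2}$ error term, and the tube argument of \cite[Section 4]{B1989} then yields $\delta^{2\gamma} R^{2} \ll 1$, whence $\mathrm{meas}(\cR_{\delta}) \ll R/N \ll \delta^{-4-\eps}/N$.

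The principal task, and indeed the only genuinely new calculation, is the fourth-moment bound derived from \eqref{eqn3.8}. Once this is in hand, everything downstream is a near-verbatim repetition of Proposition \ref{prop3.1}, with the only additional bookkeeping being the inequality $\eps' \le \eps\chi/2$ needed to stitch the two regimes together; no genuine conceptual obstacle arises.
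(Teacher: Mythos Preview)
Your proposal is correct and follows essentially the same route as the paper: both replace the $L^2$ input \eqref{eqn3.4} by the fourth-moment bound $\int_\bT |\hat\psi|^4 \le \|\nu\|_\infty^4 E_c(N) \ll_\eps N^{3+\eps}$ coming from \eqref{eqn3.8}, deduce $\mathrm{meas}(\cR_\delta) \ll_\eps \delta^{-4} N^{\eps-1}$, and then rerun the Bourgain argument of Proposition~\ref{prop3.1} with $t_0$ replaced by $4$. One small point of bookkeeping: to absorb the $N^{\gamma(1-\chi)}R^2$ term you need the strict inequality $\eta' < \chi/2$ (so take $\eps' < \eps\chi/2$ and split at $\delta = N^{-\eta'}$ rather than at $N^{-\chi/2}$), but this is cosmetic.
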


\begin{proof} We follow the proof of Proposition \ref{prop3.1}, \emph{mutatis mutandis}. We replace $t_0$ by 4. For any $\eps>0$, we see from \eqref{eqn3.8} that
\[
\del^4 N^4 \meas(\cR_\del) \le \int_\bT |\hat \psi(\alp)|^4 \d \alp \le \| \nu \|_\infty^4 E_c(N) \ll_\eps N^{3+\eps},
\]
so
\[
\meas(\cR_\del) \ll_\eps \del^{-4} N^{\eps-1}.
\]
Essentially the same proof then carries through.
\end{proof}

\section{An update on the transference principle}
\label{sec4}

In this section, we refine the general transference machinery given by Browning and Prendiville in \cite[Proposition 2.8]{BP2017}. Ko\'sciuszko mentioned this possibility in \cite{Kos}.
To elucidate the quantitative aspects of this result, we begin by recalling the key concept of pseudorandomness.

\begin{defn} Let $\bc \in \bZ^s$, and let $\nu: \bZ \to [0,\infty)$ be supported on $[N]$. We say that $\nu$ is \emph{$\bc$-pseudorandom} if, for any $\del > 0$, there exists $c(\del) > 0$ such that if $0 \le f \le \nu$ then
\[
\| f \|_1 \ge \del \| \nu \|_1 \quad \Longrightarrow \quad
\sum_{\bc \cdot \bx = 0} \prod_{i \le s} f(x_i) \ge c(\del) \sum_{\bc \cdot \bx = 0} \prod_{i \le s} \nu(x_i).
\]
\end{defn}

For the pseudorandomness of $1_{[N]}$, Browning and Prendiville infer a quantitative rate $c(\del)$ from earlier work of Bloom \cite{Blo2012}. We now update this rate by incorporating recent advances due to Ko\'sciuszko \cite{Kos} and Filmus, Hatami, Hosseini, and Kelman \cite{FHHK2024}. The latter generalises the aforementioned breakthrough of Kelley and Meka \cite{KM2023}.

\begin{lemma}
\label{lem4.2}
Let $\tau(3)=9$, and set $\tau(s)=7$ for all $s\geqslant 4$. Let $s \ge 3$ and let $c_1, \ldots, c_s$ be non-zero integers summing to zero. Then $1_{[N]}$ is $\bc$-pseudorandom with quantitative rate
\[
c(\del) = \exp(-C \log^{\tau(s)}(2/\del)),
\]
where $C$ is a large, positive constant depending on $\bc$ alone.
\end{lemma}

\begin{proof}
Throughout this proof, the letter $C$ will denote a sufficiently large positive constant which depends on $\bc$ alone. The value of $C$ may change from line to line.
    
We claim that it suffices to show that for all $N\in\N$ and all 
$
\del \ge 2/N,
$
if $A\subseteq[N]$ satisfies $|A|\geqslant\delta N$, then
\begin{equation}
\label{eqn4.1}
|\{\bx\in A^s: \bc\cdot\bx = 0\}| \geqslant N^{s-1}\exp(-C\log^{\tau(s)}(2/\delta)).
\end{equation}
To see this, let $f:[N]\to[0,1]$ with $\lVert f\rVert_1 \geqslant \delta N$, where $N \in \bN$ and $\del > 0$, and let $A = \{ n\in[N]: f(n) \geqslant \del/2\}$. By averaging, we have $|A|\geqslant \del N/2$.
Since $f\geqslant (\delta/2)1_A$, if $N\geqslant 2/\del$ then we infer the required bound
\begin{equation}
\label{eqn4.2}
\sum_{\bc \cdot \bx = 0} \prod_{i \le s} f(x_i)
\geqslant \exp(-C\log^{\tau(s)}(2/\delta)) \sum_{\bc \cdot \bx = 0} \prod_{i \le s} 1_{[N]}(x_i).
\end{equation}
If instead $2/\del > N$, then \eqref{eqn4.2} follows from one of the power mean inequalities:
\[
\sum_{\bc \cdot \bx = 0} \prod_{i \le s} f(x_i) \ge \sum_{n \le N} f(n)^s \ge N (N^{-1} \| f \|_1)^s \ge \del^s N.
\]

It therefore remains to verify \eqref{eqn4.1}. For $s\geqslant 4$, this follows immediately from \cite[Theorem 3]{Kos}. 

\bigskip

For $s=3$, we need to perform some additional manoeuvres to extract \eqref{eqn4.1} from \cite{FHHK2024}. We begin with \cite[Theorem 1.5 (ii)]{FHHK2024}, which tells us that, for any prime $p>\lVert \bc\rVert_1$, if $B\subseteq\Z/p\Z$ does not admit a non-trivial solution to $\bc\cdot\bx \equiv 0 \mmod{p}$ with $\bx \in B^s$, then
\begin{equation*}
|B|/p\leqslant \exp(-C^{-1}\log^{1/9}p).
\end{equation*}
To ensure that the conditions of the cited theorem hold, one uses the parametrisation $({(c_1+c_2)x + c_2z, (c_1+c_2)y - c_1z, c_1x + c_2y})$ for solutions to $\bc\cdot\bx = 0$ over $\Z/p\Z$. This parametrisation ``captures'' solutions to $\bc\cdot\bx = 0$, in the sense that every solution is counted exactly $p$ many times.
The ``underlying graph'' is then a complete graph of order $3$, which is ``$2$-degenerate'' --- see \cite[Example 1.7]{FHHK2024} and the paragraph preceding it.
    
If we choose $\lVert \bc\rVert_1 N < p \le 2 \lVert \bc\rVert_1 N$, then $\bx\in[N]^s$ satisfies $\bc\cdot\bx=0$ if and only if $\bc\cdot\bx \equiv 0 \mmod{p}$. Hence, by embedding $[N]$ into $\Z/p\Z$ in the usual way, the cited theorem implies --- upon rearranging --- that there exists a positive integer $k \leqslant  \exp(C\log^{9}(2/\delta))$ such that every $S\subseteq[k]$ with $|S|\geqslant \delta k/2$ contains a non-trivial solution to $\bc\cdot\bx = 0$. Since the entries of $\bc$ sum to zero, an averaging argument of Varnavides~\cite{Var1959} --- with the calculation as in \cite[Proof of Lemma 6.8]{Gre2005} --- reveals that if $A \subseteq \bZ/p\bZ$ and $|A| \geqslant \eta p \ge 1$ then
\begin{equation*}
|\{\bx\in A^s: \bc\cdot\bx = 0\}| \geqslant \frac{\eta p^2}{2k^2} \geqslant N^2\exp(-C\log^{9}(2/\eta)).
\end{equation*}
This verifies \eqref{eqn4.1}, upon choosing $\eta = \delta/(2\|\bc\|_1)$.
\end{proof}

This lemma leads to the following refinement of \cite[Proposition 2.8]{BP2017}.

\begin{thm}
\label{thm4.3}
Let $s\geqslant 3$ and let $\tau(s)$ be as in the statement of Lemma~\ref{lem4.2}.
Let $c_1, \ldots, c_s$ be non-zero integers summing to zero. Let $\cK$ be a finite union of $k$ many proper subspaces of the hyperplane defined by $\bc \cdot \bx = 0$. Let $\nu: \bZ \to [0,\infty)$ be supported on $[N]$, with the following properties:
\begin{enumerate}[(i)]
\item (Fourier decay) For some $\tet \in (0,1]$,
\[
\| \hat \nu - \hat 1_{[N]} \|_\infty \le \tet N.
\]
\item (Fourier restriction) For some $t \in [s-1,s)$,
\[
\sup_{|\psi| \le \nu} |\hat \psi(\alp)|^t \d \alp \ll_t \| \nu \|_1^t N^{-1}.
\]
\item ($\cK$-trivial saving) For some $\eta > 0$,
\[
\sum_{\bx \in \cK} \prod_{i \le s} \nu(x_i) \ll_{s,k} \| \nu \|_1^s N^{-1-\eta}.
\]
\end{enumerate}
Let $\cA \subseteq \supp(\nu)$ be such that if $\bx \in \cA^s$ and $\bc \cdot \bx = 0$, then $\bx \in \cK$. Then, for some sufficiently large constant $C = C(\bc, t, k, \eta) > 1$,
\[
\sum_{n \in \cA} \nu(n) \le \frac{N}{\exp\left(C^{-1} \min\{\log\log (2/\theta), \log N\}^{1/\tau(s)}\right)}.
\]
\end{thm}

\begin{proof}
After inserting the bounds given in Lemma \ref{lem4.2} into \cite[Equations (2.4) and (2.5)]{BP2017}, the theorem follows with the same proof as \cite[Proposition 2.8]{BP2017}.
\end{proof}

\section{Applying transference}
\label{sec5}

In this final section, we use our Fourier decay and restriction estimates to prove Theorems \ref{thm1.4} and \ref{thm1.8}.

\subsection{Bounding the weighted number of trivial solutions}

Let $c_1, \ldots, c_s$ be non-zero integers, where $s \ge 3$.  Let $1 < c < 2$, and put $\gam = 1/c$.

\begin{lemma} 
\label{lem5.1}
Let
\[
\cK = \{ \bn \in \bZ^s: \bc \cdot \bn = 0, \quad
n_i = n_j \text{ for some } i \ne j \}.
\]
Then, for some $\eta > 0$,
\[
\sum_{\bn \in \cK} \prod_{i\le s} \nu(n_i) \ll_{c,\eta} N^{s - 1 - \eta}.
\]
\end{lemma}

\begin{proof} Put
\[
e_1 = c_1, \ldots, e_{s-2} = c_{s-2}, \qquad
e_{s-1} = c_{s-1} + c_s.
\]
By symmetry, it suffices to prove that
\begin{equation}
\label{eqn5.1}
\sum_{\bn \in K} \prod_{i \le s} \nu(n_i) \ll N^{s-1-\eta},
\end{equation}
where
\begin{align*}
K &= \{ \bn \in \bZ^s: \bc \cdot \bn = 0, \quad
n_{s-1} = n_s \} \\
&= \{ (n_1, n_2, \ldots, n_{s-1}, n_{s-1}) \in \bZ^s:
e_1 n_1 + \cdots + e_{s-1} n_{s-1} = 0 \}.
\end{align*}
Recall that $\| \hat \nu \|_\infty \ll N$ and $\| \nu \|_\infty \ll N^{1-\gam}$. By orthogonality,  H\"older's inequality, and periodicity,
\begin{align*}
\sum_{\bn \in K} \prod_{i \le s} \nu(n_i) &\ll N^{1-\gam}
\sum_{\bn \in K} \prod_{i \le s - 1} \nu(n_i) \\
&= N^{1-\gam}
\int_\bT \hat \nu(e_1 \alp) \cdots \hat \nu(e_{s-2}\alp) \hat \nu (e_{s-1} \alp) \d \alp \\
&\ll N^{s-2-\gam} \int_\bT |\hat 
\nu(\alp)|^2 \d \alp \ll N^{s - 2 \gam} = N^{s - 1 - \eta},
\end{align*}
where $\eta = 2\gam - 1 > 0$. This confirms \eqref{eqn5.1}, thereby completing the proof of Lemma \ref{lem5.1}.
\end{proof}

\subsection{Proof of Theorem \ref{thm1.4}}

We establish the restriction estimate \eqref{eqn3.3} by dividing the analysis into two cases. If $s \le 4$, then we have $1 < c < c^*(s)$. Our definition (\ref{eqn1.1}) of $c^*(s)$ then guarantees that there exist $\chi, t_0, t \in \bR$ satisfying \eqref{eqn3.2} and
\[
s - 1 < t_0 < t < s, \qquad
0 < \chi < \frac{2}{5c} - \frac15.
\]
Hence, Proposition \ref{prop2.1} provides us with the Fourier decay estimate \eqref{eqn3.1}. Then, by Proposition \ref{prop3.1}, we have the restriction estimate \eqref{eqn3.3}. 
If instead $s \ge 5$ then, by Propositions \ref{prop2.1} and \ref{prop3.2}, we again have \eqref{eqn3.3} for some $t \in (s-1,s)$. 
For any $s \ge 3$, it follows from \eqref{eqn3.1} that
\[
\left\| \frac{\hat \nu}{\| \nu \|_1} - \frac{\hat{1}_{[N]}}{\| 1_{[N]} \|_1} \right \|_\infty \le \frac1M,
\]
for some $M \asymp N^\chi$.

For $F_1, \ldots, F_s: \bZ \to \bC$ compactly supported, define
\[
T(F_1, \ldots, F_s) = \sum_{\bc \cdot \bn = 0} F_1(n_1) \cdots F_s(n_s). 
\]
We abbreviate
\[
T(F) = T(F,\ldots,F).
\]
For $i \in [r]$, let
\[
f_i(n) = \begin{cases}
\phi'(n)^{-1}, &\text{if } n \in C_i \\
0, &\text{otherwise},
\end{cases}
\]
so that $\sum_{i \le r} f_i = \nu$. By the modelling lemma \cite[Proposition 14.1]{CLP2021}, there exist $g_1, \ldots, g_r: [N] \to [0, \infty)$ such that 
\[
g_1 + \cdots + g_r = (1 + M^{-1/2}) 1_{[N]}
\]
and
\[
\left \| \frac{\hat f_i}{\| \nu \|_1}
- \frac{\hat g_i}{N} \right \|_\infty \ll (\log N)^{-1/(t+2)}.
\]
The generalised von Neumann lemma \cite[Lemma C.3]{CLP2021} now gives
\[
T(f_i/\|\nu\|_1) - T(g_i/N)
\ll N^{-1} (\log N)^{-(s-t)/(t+2)} \qquad (1 \le i \le r).
\]

By functional FGR \cite[Lemma 15.2]{CLP2021}, there exists $k$ such that
\[
T(g_k) \gg N^{s-1}.
\]
Since $\| \nu \|_1 \asymp N$, for this value of $k$ we therefore have
\[
T(f_k) \gg N^{s-1}.
\]
By Lemma \ref{lem5.1}, this shows that there exists $\bx \in C_k^s$ with \eqref{eqn1.2}.

\subsection{Proof of Theorem \ref{thm1.8}}

Let $A\subseteq\PS_c(N)$ be as in the statement of Theorem \ref{thm1.8}. 
We compute that
\begin{align*}
\sum_{n \in A} \nu(n) & \ge \sum_{x \le |A|} cx^{c-1} \geqslant \int_{0}^{|A|}cx^{c-1}\d x = |A|^c.
\end{align*}
With the same constants $\chi > 0$ and $t \in (s-1, s)$ from the previous subsection, we again have the Fourier decay estimate \eqref{eqn3.1} and the restriction estimate \eqref{eqn3.3}. Thus, by Lemma \ref{lem5.1},
all of the sufficient conditions in Theorem \ref{thm4.3} are met, delivering the claim. 

\providecommand{\bysame}{\leavevmode\hbox to3em{\hrulefill}\thinspace}

\end{document}